\newcommand{\C}{\mathbb{C}} 
\newcommand{\Jnd}{\mathcal{J}} 
\newcommand{\e}{\varepsilon}
\newcommand{\Pol}{{\rm Pol}}
\newcommand{\be}{\begin{equation}}
\newcommand{\ee}{\end{equation}}
\newcommand{\N}{\mathbb{N}}
\newcommand{\ot}{\otimes}
\newtheorem*{thmnn}{Theorem}
\newtheorem{thm}{Theorem}[section]
\newtheorem{defin}[thm]{Definition}
\newtheorem{lemma}[thm]{Lemma}
\newtheorem{prop}[thm]{Proposition}
\newtheorem{cor}[thm]{Corollary}
\newtheorem{remark}[thm]{Remark}
\newcommand{\AuQ}{A_u(Q)}
\numberwithin{equation}{section}
\newenvironment{customthm}[1]
{\innercustomthm}
{\endinnercustomthm}
\begin{document}

\title[Cohomology of unitary quantum groups]{Second cohomology groups of the Hopf$^*$-algebras associated to universal unitary quantum groups}

\author{Biswarup Das}
\address{Instytut Matematyczny, Uniwersytet Wroc\l awski, pl.Grunwaldzki 2/4, 50-384 Wroc\l aw, Poland}
\email{biswarup.das@math.uni.wroc.pl}

\author{Uwe Franz}
\address{Laboratoire de math\'ematiques de Besan\c{c}on,
	Universit\'e de Bourgogne Franche-Comt\'e, 16, route de Gray, 25 030
	Besan\c{c}on cedex, France}
\email{uwe.franz@univ-fcomte.fr}
\urladdr{http://lmb.univ-fcomte.fr/uwe-franz}

\author{Anna Kula}
\address{Instytut Matematyczny, Uniwersytet Wroc\l awski, pl.Grunwaldzki 2/4, 50-384 Wroc\l aw, Poland}
\email{Anna.Kula@math.uni.wroc.pl}

\author{Adam Skalski}
\address{Institute of Mathematics of the Polish Academy of Sciences,
	ul.~\'Sniadeckich 8, 00--656 Warszawa, Poland
}
\email{a.skalski@impan.pl}


\subjclass[2010]{Primary 16E40; Secondary 16T20}
\keywords{Hopf $^*$-algebra; cohomology groups; quantum groups}

\begin{abstract}
	We compute the second (and the first) cohomology groups of $^*$-algebras associated to the universal quantum unitary groups of not necesarily Kac type, extending our earlier results for the free unitary group $U_d^+$.
	The extended setup forces us to use infinite-dimensional representations to construct the cocycles. 
\end{abstract}

\maketitle


\section{Introduction}
The problem of computing Hochschild cohomology groups related to Hopf algebras associated to compact quantum groups was given a strong initial impetus in the work of Collins, H\"artel and Thom (\cite{CHT}), who considered free quantum orthogonal groups, and since then has seen a lot of progress, mainly due to Bichon (see [Bic$_{1-3}$] and references therein). The original motivation of \cite{CHT} was related to trying to understand whether the von Neumann algebras of free orthogonal groups might be isomorphic to free group factors, which was only very recently disproved by Brannan and Vergnioux (\cite{BraVer}).
Since then computing the relevant cohomology groups were seen to have many connections to other parts of cohomological algebra, but also for example to the classification of quantum L\'evy processes: for these we refer to \cite{BichonNotes} and to \cite{DFKS18}.

In this article we compute the first and the second Hochschild 
cohomology of the universal unitary quantum groups of Wang and Van Daele (\cite{wang+vandaele96}), denoted $U_Q^+$, 
for almost all choices of  a positive invertible matrix  $Q \in M_d$  (with one exception appearing in the case of the matrix $Q$ with three distinct eigenvalues), extending the methods from 
\cite{DFKS18}, where only the free case, i.e.\ $Q=I$, was treated. Therein, we 
showed among other things  that 
$$H^2(U_{I_d}^+) \cong sl(d) \cong \C^{d^2-1}.$$
where $sl(d)$ denotes the space of $d\times d$ complex matrices with trace 
zero. We have also studied in that paper the free orthogonal quantum groups, although for them the result had already been known due to \cite{CHT, 
BichonCompositio}. Our method was in a sense much more elementary than those applied in the papers cited above; in particular it allowed us to produce in each case a concrete minimal sets cocycles which generate the second cohomology groups, each of which was built as the cup-product of $1$-cocycles with values in certain finite-dimensional modules.

The study in this paper follows first a similar line of reasoning: given a general unitary quantum group $U_Q^+$ computing the first cohomology group is easy (see Proposition \ref{prop1coh}), and then we can define what we call the defect map, whose image (a certain subspace of $M_d(\C)$) can be shown to be isomorphic to the second cohomology group. Once we identify natural restrictions that the matrices in the subspace satisfy, it remains to construct cocycles for which the defect map would attain the desired values. Here the generality with which we work in this paper makes a significant difference in comparison to \cite{DFKS18}. We show that it is in general impossible to produce sufficiently many $2$-cocycles as cup products of finite-dimensional $1$-cocycles. However it turns out that in most cases we can use a concrete infinite-dimensional representation to solve the problem, which leads us to the main result of this paper.

\begin{thmnn}
Let $d \in \N$ and let $Q\in M_d(\C)$ be a strictly positive matrix, whose eigenvalue list is not of the form $(p, pq,pq^2)$ with $p>0, q\in(0, \infty) \setminus\{1\}$. 
Then \[ H^2(U_Q^+) \simeq sl_Q(d),\]
where $sl_Q(d) = \{A \in M_d(\C): AQ = QA, \; \textup{Tr}(AQ) = \textup{Tr}(AQ^{-1})=0\}$.	
\end{thmnn}

The detailed plan of the paper is as follows: after we conclude this introduction, in Section \ref{sec:prelim} we recall basic facts concerning on one hand the universal unitary quantum groups and the representations of the associated $^*$-algebras, and on the other abstract Hochshild cohomology of unital $^*$-algebras. In Section 3 we discuss $1$-cocycles, including those related to not-necessarily trivial bimodules. Finally Section 4 contains the main results of the paper: we first introduce the defect map, and then compute its image, leading to the final theorem.

\section{Preliminaries} \label{sec:prelim}

Throughout we will work with complex unital $^*$-algebras, and by a representation of such an algebra $A$ we will understand a unital $^*$-homomorphism from $A$ to $B(H)$, where $H$ is a Hilbert space. We write $\N_0$ for $\N \cup \{0\}$.

If $d \in \N$ and $Q \in M_d(\C)$ is a strictly positive matrix, we will call another matrix $A \in M_d(\C)$ a \emph{$Q$-matrix} if it commutes with $A$; once we find  a basis with respect to which
$Q$ is diagonal, so that $Q=\sum_{i=1}^n q_i P_{i}$, with $q_i>0$, $q_i \neq q_j$ for $i \neq j$, $i,j =1,\ldots n$, and $(P_{1}, \ldots, P_{n})$ orthogonal projections summing up to $1$, the $Q$-matrices are those which are block-diagonal with respect to the decomposition given by $(P_{1}, \ldots, P_{n})$. We will also use this terminology for matrices in $M_d(X)$ when $X$ is an arbitrary vector space. Recall that for every $d \in \N$ we denote the set of all matrices in $M_d$ of trace $0$  by $sl(d)$. By analogy we denote by  $sl_Q(d)$ the set of all $Q$-matrices in $M_d$ of the `$Q$-trace' and  `$Q^{-1}$-trace' equal $0$: 
\begin{equation}
\label{slQd} sl_Q(d) = \{A \in M_d(\C): AQ = QA, \textup{Tr}(AQ) = \textup{Tr}(AQ^{-1})=0\}.
\end{equation}
Note that for $Q$ with eigenspaces of dimensions $(d_1, \ldots, d_n)$ we have a natural inclusion
\[sl(d_1)\oplus \ldots \oplus sl(d_n) \subset 
sl_Q(d).  \]
\subsection*{Universal unitary quantum groups}

For a matrix $F\in GL_d(\C)$ ($d \in \N$)  the \emph{universal unitary quantum group} (in the sense of
Banica, cf.\ \cite{banica97}) 
was defined via the universal unital $^*$-algebra generated by $d^2$ elements $u_{jk}$ ($j,k=1,2,...d$) such that the matrix
$u:=(u_{jk})_{j,k=1}^d$ is unitary and its conjugate, $\bar{u}$, is similar to a unitary via $F$. More specifically, the following conditions hold: 
\[\label{eq_rel1_auf}
{\mbox{(R1)}} \ uu^*=I=u^*u ; \qquad
{\mbox{(R2)}} \
F\bar{u}F^{-1}(F\bar{u}F^{-1})^*=I=(F\bar{u}F^{-1})^*F\bar{u}F^{-1}.
\]
 Let us observe that the relation (R2) is equivalent to the equality 
\[ \label{eq_rel3_auf}\tag{R3}
u^tQ\overline{u}Q^{-1}=I=Q\overline{u}Q^{-1}u^t.
\]
with the positive matrix $Q=F^*F$. Thus the algebra above is
isomorphic to the $^*$-algebra canonically associated with the universal unitary quantum group $U^+_Q$ in the sense of Wang and Van Daele, cf.\
\cite{wang+vandaele96}, i.e.\ the universal unital $^*$-algebra
$A_u(Q)$
generated by the coefficients of $u:=(u_{jk})_{j,k=1}^d$ satisfying (R1) and
(R3). It follows from Proposition 6.4.7 in \cite{timmermann} that up to an isomorphism of $^*$-algebras (of compact matrix type) without loss
of generality we can (and will) assume that the matrix $Q$ is diagonal and write for short $Q_j=Q_{jj}$, $j=1,\ldots,d$. In that case, the relation \eqref{eq_rel3_auf} reads 
\begin{equation}\label{eqn_rel_Au}
\sum_{p=1}^{d}\frac{Q_{p}}{Q_{k}} u_{p j}u^*_{pk}=\delta_{jk}1, \qquad \;\;
\sum_{p=1}^{d}\frac{Q_{j}}{Q_{p}}u^*_{jp}u_{kp}=\delta_{jk}1, 
\;\;\; j,k=1,\ldots, d.
\end{equation}
The algebra $A_u(Q)$ is naturally equipped with the standard comultiplication (which will not play a role in this paper).
We will also  write $A_u(d):=A_u(I_{d})$, $U^+_d:=U^+_{I_d}$ and call $U^+_d$ the \emph{free unitary quantum group}.
Note that we have a distinguished character (one-dimensional representation) $\epsilon: A_u(Q) \to \C$ given by the formula 
\[\epsilon(u_{jk}) = \delta_{jk}, \;\;\; j,k=1,\ldots, d.\]
Given a positive diagonal matrix $Q$ with eigenspaces of dimensions $(d_1, \ldots, d_n)$ one has natural inclusions of quantum groups $U^+_{d_1}, \ldots, U^+_{d_n}$ into $U_Q^+$, manifesting themselves as surjective unital $^*$-homomorphisms $\pi_i:A_u(Q) \to A_u(d_i)$, with $i=1,\ldots, n$, with the respective algebras viewed as the algebras of `polynomials' on respective quantum groups. The homomorphisms $\pi_i$ are given by mapping generators in the appropriate matrix block to generators of the smaller algebra, and other generators to $0$ or $1$, depending on whether they lie on the diagonal or not.

By the universality property there is a one-to-one correspondence between representations $\pi$ of $A$ on a Hilbert space $H$ and unitary matrices $R \in M_d(B(H))$ such that 
\begin{equation}R^t Q\bar{R} Q^{-1} = I = Q \bar{R} Q^{-1} R^t, \label{RQmatrix} \end{equation}
with $R_{jk} = \pi(u_{jk}), j,k=1,\ldots, d$.
In fact all finite-dimensional representations of $\AuQ$ have a particular, `block-diagonal' form: for such representations $R$ is a $Q$-matrix in the sense discussed above. This follows for example from \cite[Theorem 4.9]{Soltan}; we refer for the details to the note \cite{AABU}. Note that for a  unitary $Q$-matrix $R\in M_d(B(H))$ the condition \eqref{RQmatrix} reduces to $ R^t \bar{R}  = I = \bar{R} R^t$.
For brevity, we will further call representations $\pi$ for which the matrix $R$ is a $Q$-matrix simply \emph{$Q$-representations}.

Here we record the existence of certain infinite-dimensional representations, which will be of use later. Let $q\in (0,1)$ and let $\alpha, \gamma$ denote the standard generators of the algebra $\Pol(SU_q(2))$, as introduced in \cite{wor1}, i.e.\ elements satisfying the commutation relations:
\[  \alpha \gamma= q \gamma \alpha, \;\; \alpha \gamma^* = q \gamma^* \alpha, \;\; \gamma^* \gamma = \gamma \gamma^*, \]
\[ \alpha^* \alpha + \gamma^* \gamma = 1, \;\;\alpha \alpha^* + q^2 \gamma^* \gamma = 1,\]
so that 
\[U:=\begin{pmatrix}
\alpha & - q \gamma^* \\ \gamma &  \alpha^*	
\end{pmatrix}
\]
is a unitary matrix (the fundamental representation of $SU_q(2)$). It is then well-known, and checkable via a direct computation, that $U$ satisfies the defining relations of $\AuQ$ for $Q = \textup{diag} (1, q^2)$. In other words, $SU_q(2)$ is a quantum subgroup of the corresponding $U_Q^+$.

This means that when we consider a concrete realisation of $\Pol(SU_q(2))$ on the Hilbert space $\ell^2(\N_0)$ given by the formulas (for simplicity we just use the same symbols for the relevant operators) $$\alpha e_0=0,\ \alpha e_k = \sqrt{1-q^{2k}}e_{k-1}, k \in \N, \qquad
\gamma e_l =  q^l e_l, \;\; l \in \N_0,$$
we immediately obtain a representation of $A_u(Q)$, with $Q$ as above. Similarly 
the matrix
\begin{equation}
 R=\begin{pmatrix}
I_{\ell^2(\N_0)} & 0 & 0 \\ 0 & \alpha & -q\gamma^* \\ 0 & \gamma & \alpha^*
\end{pmatrix}
\label{infdimrep}\end{equation}
determines a representation of $A_u(Q)$ on $\ell^2(\N_0)$ for $Q = \textup{diag} (p^2, 1, q^2)$, $p >0$. Note that none of the above representations is a $Q$-representation.

\subsection*{Cohomology groups for CQG algebras} 
Let $A$ be a unital $^*$-algebra with  character $\e: A \to \C$.
We will be interested in the Hochschild cohomology of  $A$  with 
trivial coefficients (i.e.\ with the bimodule in question being $\C$ with left and right action given by $\e$), and we are 
going to use the following notations ($m\in \mathbb{N}$ and $\partial$ denotes the standard boundary operator, so that
for each  $\psi \in L(A^{\otimes(m-1)};\mathbb{C})$ and $a_1, \ldots, a_m\in A$ we have 
$(\partial \psi)(a_1\otimes\cdots \otimes a_m)=\e(a_1) \psi(a_2\otimes\cdots \otimes a_m)+
\sum_{j=0}^n (-1)^j 
\psi(a_1\otimes\cdots \otimes (a_ja_{j+1})\otimes \cdots \otimes a_{m})+
\psi(a_1\otimes\cdots \otimes a_{m-1})\e(a_m)$)
\begin{align*}
C^m(A)&=C^m(A,{_\e}\C_\e)= \{c:A^{\otimes m}\to \C, \mbox{ linear}\} \quad \mbox{($m$-cochains)};\\
Z^m(A)&=Z^m(A,{_\e}\C_\e)=\{c\in C^m(A): \partial c=0\} \quad \mbox{($m$-cocycles)}; \\
B^m(A)&=B^m(A,{_\e}\C_\e)=\{c\in Z^m(A): \exists_{\psi\in C^{m-1}(A)} \ \ 
c=\partial \psi\} \quad \mbox{($m$-coboundaries}); \\
H^m(A)& = Z^m(A)/B^m(A). 
\end{align*}
In the case when $A= \AuQ$ with the standard character we will also use the notation $H^m(U_Q^+)$ for  $H^m(\AuQ)$ and speak simply of the $m$-th cohomology group of the quantum group $U_Q^+$.
Note that if we are given two algebras $A_1$, $A_2$ as above, with respective characters $\e_1, \e_2$, and a surjective $^*$-homomorphism $\pi:A_1 \to A_2$ such that $\e_1 = \e_2 \circ\pi$, then any cocycle in $Z^m(A_2)$ gives rise to a cocycle in $Z^m(A_1)$ simply by composing with an appropriate tensor power of $\pi$. Taking into account the comments in the last subsection this means that given a positive diagonal matrix $Q$ with eigenspaces of dimensions $(d_1, \ldots, d_n)$ we have a natural map from $Z^m(U^+_{d_i})$ to $Z^m(U^+_Q)$ for each $m\in \N$, $i=1, \ldots, n$.

Naturally the explicit computation of the second cohomology group of a given algebra formally amounts to first obtaining a criterion which allows one to decide when a given $2$-cocycle is a coboundary, and then 
finding means to construct sufficiently many `non-trivial' cocycles. We will now recall two results which will assist us in that matter; they also formed the key abstract tools in \cite{DFKS18}.

Given linear maps $\psi:A\to \C$ and $c:A\otimes A \to \C$ such that  $\psi(1)=0$, $c(1\otimes 1)=0$, we define $T_{c,\psi}:A \to {\rm End}(\C \oplus \ker \e \oplus \C)$ by
$$T_{c,\psi}(a) = \begin{pmatrix} 
\e(a) & c(a \otimes -) & -\psi(a) \\
0 & a \cdot - & a-\e(a) \\
0 & 0 & \e(a)
\end{pmatrix}, \;\;\; a \in A.
$$
The following result is \cite[Lemma 5.4]{BFG17}. 

\begin{lemma} \label{lem_BFG}
	Let $c$ be a 2-cocycle on $A$, i.e.~$c\in Z^2(A,_\e\C_\e)$, and let $\psi: A \to \mathbb{C}$ be linear, with $\psi(1)=0$, $c(1\otimes 1)=0$. The mapping $T_{c,\psi}$ is a homomorphism if and only if $c\in B^2(A,\C)$ and $c=-\partial\psi$.
\end{lemma}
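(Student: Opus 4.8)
The plan is to verify directly that $T_{c,\psi}$ respects products, exploiting its upper-triangular $3\times 3$ block structure with respect to the decomposition $V := \C \oplus \ker\e \oplus \C$. First I would check that $T_{c,\psi}(a)$ genuinely lands in $\mathrm{End}(V)$: the middle-diagonal entry $a\cdot{-}$ preserves $\ker\e$ because $\e(a\xi)=\e(a)\e(\xi)=0$ for $\xi\in\ker\e$, and the middle-right entry lands in $\ker\e$ because $\e(a-\e(a)\one)=0$; the remaining entries are scalars or the functional $c(a\otimes{-})$ restricted to $\ker\e$, so they are manifestly well defined.

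Next I would record two normalisations forced by the cocycle hypothesis $\partial c=0$. Evaluating $\partial c$ on $\one\otimes\one\otimes\xi$ with $\xi\in\ker\e$ gives $c(\one\otimes\xi)=0$, and evaluating it on $a\otimes\one\otimes\one$ gives $c(a\otimes\one)=0$; together with $\psi(\one)=0$ and $c(\one\otimes\one)=0$ these yield $T_{c,\psi}(\one)=I$, so unitality is automatic. The identity $c(a\otimes\one)=0$ also lets me simplify $c\big(a\otimes(b-\e(b)\one)\big)=c(a\otimes b)$, which is the only place the middle-right column interacts with the top row.

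The heart of the argument is then the blockwise comparison of $T_{c,\psi}(ab)$ with $T_{c,\psi}(a)T_{c,\psi}(b)$. The $(1,1)$, $(2,2)$ and $(3,3)$ entries match trivially from multiplicativity of $\e$ and of left multiplication, and the $(2,3)$ entry matches because $a(b-\e(b)\one)+(a-\e(a)\one)\e(b)=ab-\e(ab)\one$. Only two entries carry content. The $(1,2)$ entry matches if and only if $c(ab\otimes\xi)=\e(a)c(b\otimes\xi)+c(a\otimes b\xi)$ for all $\xi\in\ker\e$, which is exactly $\partial c=0$ evaluated on $a\otimes b\otimes\xi$, hence automatic under the standing hypothesis $c\in Z^2(A)$. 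Finally, using the simplification above, the $(1,3)$-entries of $T_{c,\psi}(ab)$ and of $T_{c,\psi}(a)T_{c,\psi}(b)$ are $-\psi(ab)$ and $-\e(a)\psi(b)+c(a\otimes b)-\psi(a)\e(b)$ respectively; equating them for all $a,b$ is precisely the coboundary relation $c=-\partial\psi$ (in the sign conventions fixed above for $\partial$ and for the top-right entry of $T_{c,\psi}$). This establishes both implications simultaneously: the homomorphism property holds iff this last scalar identity holds for all $a,b$, i.e.\ iff $c=-\partial\psi$, and the latter immediately forces $c\in B^2(A,\C)$.

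I do not expect a genuine obstacle here, as the content is entirely the block computation. The only points requiring care are the well-definedness of the middle summand as an $A$-submodule, so that $T_{c,\psi}$ indeed takes values in $\mathrm{End}(V)$, and the two normalisations $c(\one\otimes{-})=c({-}\otimes\one)=0$, which are what decouple the cocycle condition (controlling the $(1,2)$ block) from the coboundary condition (controlling the $(1,3)$ block). Keeping the sign conventions consistent throughout is the only real bookkeeping hazard.
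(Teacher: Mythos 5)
Your overall strategy is the right one, and in fact it is the only proof available for comparison: the paper does not prove this lemma at all, but quotes it as \cite[Lemma 5.4]{BFG17}, and the expected argument is exactly the direct blockwise verification you carry out. Structurally, everything you do is sound: the well-definedness of the middle summand, the derivation of the normalisations $c(\one\ot -)=c(-\ot\one)=0$ from $c(\one\ot\one)=0$ and $\partial c=0$, the observation that the $(1,1)$, $(2,2)$, $(2,3)$, $(3,3)$ entries match for free, that the $(1,2)$ entry encodes the cocycle identity (automatic under the standing hypothesis), and that the $(1,3)$ entry is the only place where a nontrivial condition on $\psi$ appears.

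There is, however, a genuine sign error at precisely the decisive step. With the paper's displayed definition of $T_{c,\psi}$ (top-right entry $-\psi(a)$) and the standard Hochschild differential $(\partial\psi)(a\ot b)=\e(a)\psi(b)-\psi(ab)+\psi(a)\e(b)$ --- which is the convention the paper actually uses, e.g.\ in the proofs of Lemma \ref{lem_cocycle_condition} and Proposition \ref{lem_coboundary_condition} --- equating your two $(1,3)$ expressions $-\psi(ab)$ and $-\e(a)\psi(b)+c(a\ot b)-\psi(a)\e(b)$ yields $c(a\ot b)=\e(a)\psi(b)-\psi(ab)+\psi(a)\e(b)$, that is, $c=+\partial\psi$, \emph{not} $c=-\partial\psi$ as you assert. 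So, taken literally, your computation proves the statement with the opposite sign. The mismatch traces back to an inconsistency in the paper itself: the conclusion $c=-\partial\psi$ is correct for the map whose top-right entry is $+\psi(a)$, and that is the version the paper actually multiplies out when it invokes the lemma in the proof of Proposition \ref{lem_coboundary_condition} (the matrices there carry entries $\psi(u^*_{pj})$, $\psi(u_{pk})$, not their negatives). Since the whole content of the lemma is this sign-precise identity --- it dictates how $\psi$ is reconstructed from $c$ in Proposition \ref{lem_coboundary_condition} --- it cannot be waved through with ``in the sign conventions fixed above''; you should either replace the top-right entry of $T_{c,\psi}$ by $+\psi(a)$ and keep the stated conclusion, or keep the displayed $T_{c,\psi}$ and conclude $c=\partial\psi$, explicitly flagging the discrepancy in the source.
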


Although we are primarily interested in cohomology with trivial coefficients, as is well-known, 2-cocycles can be constructed by combining 1-cocycles with values in suitable bimodules via the so-called cup product.

\begin{lemma}\label{cupproduct}
	Let $\pi:A\to B(H)$ be a representation of $A$ on a Hilbert 
	space $H$, so that we can view $H$ as an $A$-bimodule with a left action given by $\pi$ and the right action given by $\e$. For $\eta_1, \eta_2\in Z^1(A,{_\pi}H_\e)$ define $c(x\otimes y) = \langle 
	\eta_1(x^*),\eta_2(y)\rangle_H$ for $x,y \in A$ and extend the resulting map linearly to $A^{\otimes 2}$. Then $c\in Z^2(A)$.
\end{lemma}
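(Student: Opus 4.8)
The plan is to verify the cocycle identity $\partial c = 0$ directly from the definition, treating the statement as a bookkeeping computation rather than something requiring a clever idea. Recall that for the $2$-cochain $c$ the condition $c \in Z^2(A)$ unwinds, using the boundary operator $\partial$ recorded in the preliminaries, to the requirement that for all $x, y, z \in A$
\[
\e(x)\, c(y \otimes z) - c(xy \otimes z) + c(x \otimes yz) - \e(z)\, c(x \otimes y) = 0.
\]
So I would fix $x, y, z \in A$, substitute $c(a \otimes b) = \langle \eta_1(a^*), \eta_2(b)\rangle_H$ into each of the four terms, and show that after applying the $1$-cocycle relations they cancel in pairs.

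The key steps are the following. For the second term I would use $(xy)^* = y^* x^*$ together with the cocycle property of $\eta_1$ in the bimodule ${}_\pi H_\e$, namely $\eta_1(y^* x^*) = \pi(y^*)\eta_1(x^*) + \e(x^*)\eta_1(y^*)$. Since $\pi$ is a $^*$-representation, $\pi(y^*) = \pi(y)^*$, and moving it across the inner product gives $\langle \pi(y^*)\eta_1(x^*), \eta_2(z)\rangle = \langle \eta_1(x^*), \pi(y)\eta_2(z)\rangle$; moreover $\e$ is a $^*$-character, so $\e(x^*) = \overline{\e(x)}$, and the scalar $\e(x^*)$ sitting in the (conjugate-linear) first slot of the inner product re-emerges as $\e(x)$. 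Hence $c(xy \otimes z) = \langle \eta_1(x^*), \pi(y)\eta_2(z)\rangle + \e(x)\, c(y \otimes z)$. Symmetrically, for the third term I would apply the cocycle property of $\eta_2$ directly, $\eta_2(yz) = \pi(y)\eta_2(z) + \e(z)\eta_2(y)$, giving $c(x \otimes yz) = \langle \eta_1(x^*), \pi(y)\eta_2(z)\rangle + \e(z)\, c(x \otimes y)$.

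Substituting these two identities back, the term $\e(x)\, c(y\otimes z)$ cancels against its counterpart from the first summand, the mixed term $\langle \eta_1(x^*), \pi(y)\eta_2(z)\rangle$ occurs with opposite signs in the expansions of $c(xy\otimes z)$ and $c(x\otimes yz)$ and cancels, and $\e(z)\, c(x\otimes y)$ cancels against the last summand, so the whole expression vanishes. The only genuinely delicate point — and the reason the cup product is defined with $x^*$ rather than $x$ in the first argument — is precisely this adjoint bookkeeping: the $^*$ is what allows the left action $\pi(y)$ produced by the $\eta_1$-cocycle relation to be transported across $\langle\cdot,\cdot\rangle_H$ so that it matches the left action produced by the $\eta_2$-cocycle relation. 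Keeping track of the conjugate-linearity convention of the inner product together with the identity $\e(x^*) = \overline{\e(x)}$ is the one place where a sign could slip, and I would double-check it before declaring the proof complete.
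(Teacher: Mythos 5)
Your proof is correct and is exactly what the paper means by its one-line proof (``direct computation''): you expand $\partial c(x\otimes y\otimes z)$, apply the $1$-cocycle identities to $\eta_1((xy)^*)$ and $\eta_2(yz)$, move $\pi(y^*)=\pi(y)^*$ across the inner product, and use $\e(x^*)=\overline{\e(x)}$ so that all four terms cancel in pairs. The adjoint bookkeeping you highlight, including the role of the $^*$ in the first argument and the $^*$-character property of $\e$, is handled correctly, so nothing is missing.
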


\begin{proof}
Direct computation; see also for example \cite[Proposition 3.1]{FGT15}.
\end{proof}

\section{One-cocycles and the first cohomology group for $U^+_Q$}

From now on we assume that $d \in \N$ and $Q \in M_d(\C)$ is a strictly positive diagonal matrix.

In this section we will consider 1-cocycles on $\AuQ$. Anticipating the use of Lemma \ref{cupproduct} we will first look at 1-cocycles with respect to arbitrary Hilbert space representations.

\begin{lemma} \label{lem_QRV}
Let $\pi:\AuQ \to B(H)$ be a representation (on a Hilbert space $H$), with the associated matrix $R\in M_d(B(H))$. Then there is a one-to-one correspondence between cocycles $\eta \in Z^1(\AuQ, _\pi H _\e)$ and matrices $V\in M_d(H)$ such that
		\begin{equation} \label{eq_R*V=}
		(R^*V)^t=\bar{R}Q^{-1}V^tQ.
		\end{equation}
The correspondence is given by
\[ \eta(u_{jk}) = V_{jk}, \;\;\; j, k =1, \ldots, d. \]		
\end{lemma}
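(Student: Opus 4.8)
The plan is to exploit the fact that a $1$-cocycle $\eta \in Z^1(\AuQ, {}_\pi H_\e)$ is precisely a twisted derivation, characterised by $\eta(ab) = \pi(a)\eta(b) + \e(b)\eta(a)$ and $\eta(1)=0$; in particular it is completely determined by its values on the algebra generators $u_{jk}$ and their adjoints $u_{jk}^*$. Writing $V_{jk} = \eta(u_{jk})$ and $W_{jk} = \eta(u_{jk}^*)$, I would organise the correspondence through the free unital $^*$-algebra $\mathcal{F}$ on the $u_{jk}$, with quotient map $q \colon \mathcal{F} \to \AuQ$. Pulling $\pi$ and $\e$ back along $q$, any choice of matrices $V, W \in M_d(H)$ extends uniquely to a twisted derivation $\hat\eta$ on $\mathcal{F}$, and a short computation shows $\hat\eta(x r y) = \hat\pi(x)\hat\e(y)\hat\eta(r)$ for any relation element $r \in \ker q$; hence $\hat\eta$ descends to $\AuQ$ if and only if it annihilates each of the defining relations \eqref{eqn_rel_Au} and their adjoints. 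Thus the lemma reduces to evaluating $\hat\eta$ on these relations.

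The next step is to carry out those evaluations. Applying $\hat\eta$ to $u^*u = I$ and using $\e(u_{jk}) = \delta_{jk}$ gives $\sum_m R_{mj}^* V_{mk} + W_{kj} = 0$, i.e.\ $W = -(R^*V)^t$, so that $W$ is forced to be a specific function of $V$. Applying $\hat\eta$ to the second relation in \eqref{eqn_rel_Au} and collecting the $Q$-weights gives $W = -\bar R Q^{-1} V^t Q$. Equating the two expressions for $W$ produces exactly \eqref{eq_R*V=}, which proves necessity: every cocycle yields a $V$ satisfying \eqref{eq_R*V=}, and the assignment $\eta \mapsto V$ is injective because $V$ determines $W$ and hence all of $\eta$.

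For sufficiency I would start from $V$ satisfying \eqref{eq_R*V=}, set $W := -(R^*V)^t$, and check that the resulting $\hat\eta$ kills the remaining relations. Applying $\hat\eta$ to $uu^* = I$ reduces to $\sum_m R_{jm} W_{km} + V_{jk} = 0$, which follows from $W = -(R^*V)^t$ and $RR^* = I$. Applying it to the first relation in \eqref{eqn_rel_Au} reduces to $R^t Q W + V^t Q = 0$; substituting $W = -\bar R Q^{-1} V^t Q$ (legitimate by \eqref{eq_R*V=}) this becomes $(R^t Q \bar R Q^{-1})V^t = V^t$, which holds because $R^t Q \bar R Q^{-1} = I$ by \eqref{RQmatrix}. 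The adjoint relations are treated identically using the companion identities $R^*R = I$ and $Q \bar R Q^{-1} R^t = I$. This shows $\hat\eta$ descends to a genuine cocycle with $\eta(u_{jk}) = V_{jk}$, giving surjectivity and completing the bijection.

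The main point to watch is that, since the entries of $R$ do not commute, transpose is not anti-multiplicative on $M_d(B(H))$; one must therefore track operator orderings carefully throughout and in particular resist the temptation to rewrite $(R^t)^{-1}$ as $\bar R$. The step I expect to demand the most care is the verification in the sufficiency direction that the single linear equation \eqref{eq_R*V=} already forces $\hat\eta$ to annihilate all of the defining relations, the remaining relations being automatic consequences of the unitarity and $Q$-twisted identities \eqref{RQmatrix} satisfied by $R$.
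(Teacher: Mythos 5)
Your proof is correct and follows essentially the same route as the paper's: applying the twisted-derivation property to the defining relations produces the same four matrix identities, necessity comes from equating the two resulting expressions for $W=[\eta(u_{jk}^*)]$, and sufficiency from defining $W$ in terms of $V$ and verifying the remaining relations via $RR^*=I$ and \eqref{RQmatrix}. The only real difference is that you justify the extension step rigorously through the free $^*$-algebra and the criterion that a twisted derivation descends iff it annihilates the generating relations, a point the paper compresses into ``extends, via the linearity and the cocycle property, to the whole $A_u(Q)$''.
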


\begin{proof}
	Consider a $\pi$-$\e$-cocycle $\eta: A_u(Q) \to 
	H$. The cocycle property implies that $\eta$ is uniquely determined by two matrices with entries in $H$, $V=[\eta(u_{jk})]_{j,k=1}^d$ and 
	$W=[\eta(u_{jk}^*)]_{j,k=1}^{d}$. A direct computation using the cocycle property and the defining relations of $\AuQ$ shows that
	\begin{equation}\label{eq_WVR_H}
	RW^t=-V, \quad R^*V=-W^t, \quad V^t=-R^tQWQ^{-1}, \quad 
	QWQ^{-1}=-Q\bar{R}Q^{-1}V^t; 
	\end{equation}
for example the first equality follows from the fact that 
\[0 =  \eta(\delta_{jk}1) = \sum_{p=1}^d \eta(u_{jp}u_{kp}^*) = 
\sum_{p=1}^d \left(\pi(u_{jp}) \eta(u_{kp}^*) + \eta(u_{jp}) \e(u_{kp})^* \right)
= \sum_{p=1}^d \left(R_{jp} W_{kp} + V_{jp} \delta_{kp} \right)\]
for all $j,k=1, \ldots, d$.
	Comparing the second and the fourth displayed equalities and using the fact that $R$ is unitary we get precisely \eqref{eq_R*V=}.

Hence the condition \eqref{eq_R*V=} is necessary for a matrix $V$ to define a cocycle. To  see that it is also sufficient, given such a $V$  define $W=-\bar{R}Q^{-1}V^tQ$.
Then the pair $V,W$ satisfies the relations \eqref{eq_WVR_H}. Indeed, the 	fourth equality is just the definition of $W$; for the third we multiply the 
	fourth equation by $R^t$ (from the left) getting 
	$$R^tQWQ^{-1}=-\underbrace{R^tQ\bar{R}Q^{-1}}_{=I \textup{ by } \eqref{RQmatrix}}V^t=-V^t;$$	the second equality follows from  \eqref{eq_R*V=}; composing both sides 
	of the second equality with $R$	leads to the first one. 

Therefore the mapping $\eta:A_u(Q) \to \C$ defined as
$$\eta(u_{jk})=V_{jk}, \quad \eta(u_{jk}^*)=W_{jk}, \;\;\; j,k =1, \ldots,d$$
	extends, via the linearity and the cocycle property, to the whole  $A_u(Q)$.
\end{proof}

The lemma naturally applies to the case of $\pi= \e$ and  immediately yields  the first cohomology groups for $\AuQ$.

\begin{prop} \label{prop1coh}
	 Let $Q$ be a positive matrix with eigenspaces of dimensions $(d_1, \ldots, d_n)$. Then 
	$$H^1(U_Q^+)= H^1(U_{d_1}^+) \oplus \cdots \oplus
	H^1(U_{d_n}^+) \cong M_{d_1} \oplus \cdots \oplus M_{d_n}. $$
\end{prop}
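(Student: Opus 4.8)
The plan is to invoke Lemma~\ref{lem_QRV} in the special case $\pi = \e$, where the associated matrix is $R = I_d$ (since $R_{jk} = \e(u_{jk}) = \delta_{jk}$) and the coefficient space is $H = \C$, so that the parametrising matrix $V$ lies in $M_d(\C)$. Substituting $R = I_d$ into~\eqref{eq_R*V=} and using $R^* = \bar R = I_d$, the relation collapses to $V^t = Q^{-1}V^t Q$, that is $QV^t = V^t Q$; transposing and using that $Q$ is diagonal (hence symmetric) this reads $QV = VQ$, so $V$ is precisely a $Q$-matrix. Thus the bijection of Lemma~\ref{lem_QRV} restricts to a linear isomorphism between $Z^1(\AuQ,\, {}_\e\C_\e)$ and the space of $Q$-matrices in $M_d(\C)$, which by the block description recalled in Section~\ref{sec:prelim} is exactly $M_{d_1}\oplus\cdots\oplus M_{d_n}$.

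Next I would dispose of the coboundaries. For the trivial bimodule ${}_\e\C_\e$ a $0$-cochain is a scalar $\lambda$, whose coboundary sends $a \mapsto \e(a)\lambda - \lambda\e(a) = 0$; hence $B^1(\AuQ) = 0$ and therefore $H^1(U_Q^+) = Z^1(\AuQ) \cong M_{d_1}\oplus\cdots\oplus M_{d_n}$.

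Finally I would match this block decomposition with the claimed direct sum indexed by the quantum subgroups $U_{d_i}^+$. Taking $Q = I_{d_i}$ in the computation just performed gives $H^1(U_{d_i}^+) \cong M_{d_i}$. Each surjection $\pi_i\colon \AuQ \to A_u(d_i)$ induces the pullback $Z^1(U_{d_i}^+) \to Z^1(U_Q^+)$ recorded in Section~\ref{sec:prelim}, and under the matrix parametrisation this pullback places the full block $M_{d_i}$ onto the $i$-th diagonal block of a $Q$-matrix. Running over $i = 1, \ldots, n$, these images are linearly independent and together exhaust all block-diagonal, i.e.\ all $Q$-, matrices; since $B^1$ vanishes throughout, the identification descends verbatim to cohomology. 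This yields the internal direct sum $H^1(U_Q^+) = H^1(U_{d_1}^+)\oplus\cdots\oplus H^1(U_{d_n}^+)$ and identifies it with $M_{d_1}\oplus\cdots\oplus M_{d_n}$. I do not anticipate a genuine obstacle here; the only points needing care are the vanishing of $B^1$ and the verification that the maps $\pi_i$ realise exactly the diagonal blocks, both of which are routine.
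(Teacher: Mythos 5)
Your proposal is correct and follows essentially the same route as the paper: apply Lemma~\ref{lem_QRV} with $\pi=\e$ (so $R=I$), reduce~\eqref{eq_R*V=} to the statement that $V$ is a $Q$-matrix, and use that coboundaries vanish for the trivial bimodule so that $H^1=Z^1$. The paper's proof is just a terser version of this; your extra verifications (that $B^1=0$ and that the pullbacks along $\pi_i$ realise the diagonal blocks) are details the paper leaves implicit.
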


\begin{proof}
	The space $H^1(U_Q^+)$ consists simply of 1-cocycles in $Z^1(U_Q^+)$. As for $\pi=\e$ the matrix $R$ is just the identity, Lemma \ref{lem_QRV} implies  that any such cocycle 
	corresponds to a $Q$-block matrix.
\end{proof}

We finish the section with another lemma which says that the matrices corresponding to cocycles with respect to $Q$-representations always take a special form (recall that finite-dimensional representations are automatically $Q$-representations).

\begin{lemma} \label{lem_QRVf}
	Let $H$ be a Hilbert space, let $\pi:\AuQ \to B(H)$ be a $Q$-representation and let $\eta \in Z_1(\AuQ, _\pi H _\e)$. Then the matrix $V\in M_d(H)$ associated with $\eta$ is a $Q$-matrix. 
\end{lemma}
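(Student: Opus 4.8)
The plan is to show directly that the off-diagonal blocks of $V$, with respect to the eigenspace decomposition of $Q$, vanish. Diagonalise $Q=\sum_a \mu_a P_a$ with distinct eigenvalues $\mu_a$, and let $I_a$ be the corresponding index set, so that the statement ``$V$ is a $Q$-matrix'' means precisely that $V_{jk}=0$ whenever $j\in I_a$, $k\in I_b$ with $a\neq b$. Since $\pi$ is a $Q$-representation, $R$ is block-diagonal, i.e.\ $R_{jk}=0$ unless $j,k$ lie in a common $I_a$; moreover $R$ is unitary and, as recorded in Section \ref{sec:prelim}, for a unitary $Q$-matrix the defining condition \eqref{RQmatrix} reduces to $R^t\bar R=I=\bar R R^t$. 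Both facts will be used.

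First I would reuse the two matrices $V=[\eta(u_{jk})]$ and $W=[\eta(u^*_{jk})]$ together with the relations \eqref{eq_WVR_H} established in the proof of Lemma \ref{lem_QRV}. Eliminating $W$ between $W^t=-R^*V$ and $V^t=-R^tQWQ^{-1}$, and using that $R$ is block-diagonal (so that the scalar weight $q_p$ appearing in the computation equals $\mu_b$ in every surviving term), one obtains for $j\in I_a$, $k\in I_b$ the self-referential identity
\[ V_{jk}=\frac{\mu_b}{\mu_a}\sum_{r\in I_a,\,p\in I_b} R_{pk}\,(R_{rj})^*\,V_{rp}. \]
The crucial point is that on the right-hand side only the entries $V_{rp}$ with $r\in I_a$, $p\in I_b$ occur, i.e.\ exactly the same block $V^{(ab)}:=(V_{jk})_{j\in I_a,\,k\in I_b}$ that sits on the left. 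Throughout one must keep the operator entries of $R$ acting on the $H$-valued entries of $V$ \emph{from the left}.

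Next I would read the right-hand side as a linear map $\Phi_{ab}$ applied to $V^{(ab)}$, where $\Phi_{ab}$ acts on $\C^{d_a}\otimes H\otimes\C^{d_b}$ (with $d_a=|I_a|$) by $(\Phi_{ab}X)_{jk}=\sum_{r,p}R_{pk}(R_{rj})^*X_{rp}$. The key observation is that $\Phi_{ab}$ factorises as a composition of two unitaries: the operator $(R^{(a)})^*$ acting on the first two legs $\C^{d_a}\otimes H$, which is unitary because the block $R^{(a)}$ of the unitary $R$ is unitary, followed by the partial index-flip $S=\sum_{p,k}R_{pk}\otimes |e_k\rangle\langle e_p|$ acting on the last two legs $H\otimes\C^{d_b}$, whose unitarity is exactly the content of the reduced relations $R^t\bar R=I=\bar R R^t$ restricted to the $b$-block. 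Hence $\Phi_{ab}$ is unitary, in particular isometric for the norm $\|X\|^2=\sum_{r,p}\|X_{rp}\|_H^2$.

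Combining the two steps yields $V^{(ab)}=\tfrac{\mu_b}{\mu_a}\Phi_{ab}(V^{(ab)})$, hence $\|V^{(ab)}\|=\tfrac{\mu_b}{\mu_a}\|V^{(ab)}\|$. Since $a\neq b$ forces $\mu_a\neq\mu_b$, the scalar $\mu_b/\mu_a$ differs from $1$, and therefore $V^{(ab)}=0$. Thus every off-diagonal block of $V$ vanishes and $V$ is a $Q$-matrix. I expect the main obstacle to be the bookkeeping in the middle step: one must track which operator acts on which $H$-entry and in which order, and then correctly recognise $\Phi_{ab}$ as a genuine unitary. This is precisely the place where \emph{both} the unitarity of $R$ and the extra $Q$-representation relation $R^t\bar R=I$ are essential — neither alone suffices to make $\Phi_{ab}$ isometric.
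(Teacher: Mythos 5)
Your proof is correct and takes essentially the same route as the paper's: restrict the cocycle relation to an off-diagonal eigenspace block of $Q$, flatten that block into a vector in $\C^{d_a}\ot H\ot\C^{d_b}$, and use unitarity of the diagonal blocks of $R$ together with the reduced relation $R^t\bar R=I=\bar R R^t$ to force $\|V^{(ab)}\|=\tfrac{\mu_b}{\mu_a}\|V^{(ab)}\|$, hence $V^{(ab)}=0$ since $\mu_a\neq\mu_b$. The only difference is presentational: you eliminate $W$ to package everything into a fixed-point identity $V^{(ab)}=\tfrac{\mu_b}{\mu_a}\Phi_{ab}(V^{(ab)})$ with $\Phi_{ab}$ a single composite unitary, whereas the paper compares the norms of the two sides of the block-restricted relation $(R^*V)^t=\bar R Q^{-1}V^tQ$ directly, using the two unitaries $\tilde X$, $\tilde Y$ and the index flip separately — the same mechanism in both cases.
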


\begin{proof}
Assume as usual that $Q$ is diagonal, with eigenspaces of dimensions $(d_1, \ldots, d_n)$ and corresponding eigenspaces of dimensions $(q_1, \ldots, q_n)$.
Let $R\in M_d(B(H))$ be the matrix associated with $\pi$.

Begin by expressing the formula \eqref{eq_R*V=} in the block-diagonal form. Let $j,k=1,\ldots,n$ be indices describing blocks of our $d$ by $d$ matrices; recall that off-diagonal blocks of $R$ and $Q$ vanish. Thus on one hand
\[ ((R^*V)^t)_{jk} = [(R^*V)_{kj}]^t = [R^*_{kk} V_{kj}]^t,\]
and on the other 
\[ (\bar{R}Q^{-1}V^t Q)_{jk} = \bar{R}_{jj} q_j^{-1} (V^t)_{jk} q_k = \bar{R}_{jj} q_j^{-1} [V_{kj}]^t q_k.\] 
For simplicity we denote thus by $Z$ the $H$-valued matrix $V_{k, j}$ (a submatrix of $V$), and write $X:=R_{kk}^*$, $Y:=\overline{R_{jj}}$, with both of these being unitary matrices corresponding respectively to the $k$-th and $j$-th block of $R$. We thus obtain the following formula:
	\[ (X Z)^t=\frac{q_k}{q_j} Y Z^t.\]
The operation denoted above by $^t$ is interpreted as follows: if we view $Z$ as an operator from $\C^{d_j}$ to $\C^{d_k} \otimes H$, then $Z^t$ is the obvious operator from $\C^{d_k}$ to $\C^{d_j} \otimes H$; thus both sides are interpreted as an operator from  $\C^{d_k}$ to $\C^{d_j} \otimes H$. 

We will now deduce from the above equality and the fact that $\lambda:=\frac{q_k}{q_j}\neq 1$ whenever $k \neq j$ that for such a choice of $k$ and $j$ we must have $Z=0$.
Indeed, let us rewrite the above formula once again in the form 
	\[XZ = \lambda (YZ^t)^t,\]
or, for every $p=1,\ldots,d_k$, $r=1,\ldots, d_j$,
\begin{equation} \sum_{l=1}^{d_k} X_{pl}Z_{lr} = \lambda \sum_{m=1}^{d_j} Y_{rm} Z_{pm}.\label{index}\end{equation}
Introduce now two vectors $\xi \in \C^{d_k} \ot \C^{d_j} \ot H$ and $\eta \in \C^{d_j} \ot \C^{d_k} \ot H$, and two new unitary operators $\tilde{X}\in B(\C^{d_k} \ot \C^{d_j} \ot H)$ and $\tilde{Y} \in  B(\C^{d_j} \ot \C^{d_k} \ot H)$ by the formulas
\[\xi_{(p,r)} = Z_{pr} = \eta_{(r,p)}, \;\;\;p=1,\ldots,d_k \textup{ and } r=1,\ldots, d_j, \]
\[\tilde{X}_{(p,r), (s,t)} = X_{ps} \delta_{rt}, \;\;\;p,s=1,\ldots,d_k \textup{ and } r,t=1,\ldots, d_j,\]
\[\tilde{Y}_{(r,p), (t,s)} = Y_{rt} \delta_{ps}, \;\;\;p,s=1,\ldots,d_k \textup{ and } r,t=1,\ldots, d_j\]
(of course $\tilde{X}= X \ot I$, $\tilde{Y} = Y \ot I$, $\|\xi\|=\|\eta\|$).
Then we check the following equalities ($p=1,\ldots,d_k$  and  $r=1,\ldots,d_j$):
\[ (\tilde{X} \xi)_{p,r} = \sum_{s=1}^{d_k} \sum_{t=1}^{d_j}  \tilde{X}_{(p,r), (s,t)} \xi_{(s,t)}  = 
\sum_{s=1}^{d_k} \sum_{t=1}^{d_j}  X_{ps} \delta_{rt} Z_{st} = \sum_{s=1}^{d_k}  X_{ps}  Z_{sr},\]
\[ (\tilde{Y} \eta)_{r,p} = \sum_{s=1}^{d_k} \sum_{t=1}^{d_j}  \tilde{Y}_{(r,p), (s,t)} \eta_{(s,t)}  = 
\sum_{s=1}^{d_k} \sum_{t=1}^{d_j}  Y_{rt} \delta_{ps} Z_{ts} = \sum_{t=1}^{d_j}   Y_{rt}  Z_{tp}.\]
Thus equality \eqref{index} can be rephrased as ($p=1,\ldots,d_k$  and  $r=1,\ldots,d_j$)
\[ (\tilde{X} \xi)_{p,r} = \lambda (\tilde{Y} \eta)_{r,p}, \]
	so further
\[ \|\tilde{X} \xi\| = \lambda \|\tilde{Y} \eta\|.\]
The latter is equivalent (by the comments following the definitions of $\xi,\eta, \tilde{X}$ and $\tilde{Y}$) to the equality
\[ \|\xi\| = \lambda \|\xi\|,\]
	so that in the end $\xi =0$, and thus $Z=0$, and $V$ must have a $Q$-matrix form, as initially claimed.

\end{proof}

\section{The second cohomology group for $\AuQ$}

In this, central, section of the paper, we will compute the second cohomology groups of $U_Q^+$ for \emph{most} matrices $Q$. Once again, $Q \in M_d(\C)$ ($ d \in \N$) will be a strictly positive diagonal matrix. Note that as the relations in $\AuQ$ do not change when $Q$ is multiplied by a positive scalar, we can -- and sometimes will -- use a suitable normalisation. By the number of blocks of $Q$ we will understand the number of its distinct eigenvalues.

\subsection*{The defect map}
We will begin from generalities concerning the $2$-cocycles. An element $c\in Z^2(U_Q^+)$ will be called \emph{normalised} if $c(1 \ot 1)=0$; then also $c(1\ot a)= c(a \ot 1)= 0$ for all $a \in \AuQ$. Note that all $2$-cocycles arising as cup products via Lemma \ref{cupproduct} are normalised.
Lemma  \ref{lem_BFG} implies that the fact whether a normalised cocycle $c\in Z^2(U_Q^+)$ is a coboundary is determined by the values the cocycle takes on elements of the form $u_{kj}^+ \otimes u_{rl}^+$, with $+ \in \{\emptyset, *\}$, $k,j,r,l=1,\ldots d$. Thus we shall now analyse the properties these values must satisfy.

\begin{lemma}\label{lem_cocycle_condition}
Let $c\in Z^2(U_Q^+)$ be a normalised $2$-cocycle and define for each $j,k =1 ,\ldots, d$
\begin{align}
A_{jk} &:=\sum_{p=1}^d c(u^*_{pj} \otimes u_{pk})  \label{eq_ajk}
\\
B_{jk} &:=\sum_{p=1}^d \frac{Q_j}{Q_p}c(u^*_{jp} \otimes u_{kp}) \label{eq_bjk}.
\end{align}
Then we also have (again for $j,k =1 ,\ldots, d$)
	\begin{align}
	A_{jk}&= \sum_{p=1}^d c(u_{jp} \otimes 
	u^*_{kp}),  \label{eq_z1}
	\\
	B_{jk} &=\sum_{p=1}^d 
	\frac{Q_p}{Q_k} c(u_{pj} \otimes u^*_{pk}).  \label{eq_z2}
	\end{align}
\end{lemma}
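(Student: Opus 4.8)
The two asserted identities say that two a priori different contractions of the cocycle $c$ against the generators yield the same matrix. The only structural information available is the $2$-cocycle condition $\partial c=0$, the defining relations of $\AuQ$, and the normalisation $c(1\ot a)=c(a\ot 1)=0$. The plan is therefore to apply $\partial c=0$ to two carefully chosen triples of generators and, after summing, to collapse the products of generators by means of the relations, arranging matters so that the product terms disappear by normalisation. I will carry this out for \eqref{eq_z1}; the identity \eqref{eq_z2} will then follow by a verbatim argument in which the plain unitarity relations are replaced by their $Q$-weighted counterparts from \eqref{eqn_rel_Au}.

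For \eqref{eq_z1}, first I would write the cocycle condition
\[ \e(a_1) c(a_2 \ot a_3) - c(a_1 a_2 \ot a_3) + c(a_1 \ot a_2 a_3) - c(a_1 \ot a_2) \e(a_3) = 0 \]
for $a_1 = u^*_{pj}$, $a_2 = u_{pk}$ and an arbitrary $a_3=a \in \AuQ$, and sum over $p$. By the relation $u^*u=I$, i.e. $\sum_p u^*_{pj} u_{pk} = \delta_{jk} 1$, the second term vanishes by normalisation, while the character $\e(u^*_{pj})=\delta_{pj}$ selects $p=j$ in the first term. This leaves the intermediate identity
\[ c(u_{jk} \ot a) + \sum_p c(u^*_{pj} \ot u_{pk}\, a) = \e(a)\, A_{jk}, \qquad a \in \AuQ. \]

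Then I would substitute $a=u^*_{lk}$ and sum over $k$. Now $\sum_k u_{pk} u^*_{lk} = \delta_{pl} 1$ by $uu^*=I$, so the middle term becomes $c(u^*_{lj} \ot 1)=0$ by normalisation, while $\e(u^*_{lk})=\delta_{lk}$ collapses the right-hand side to $A_{jl}$. The surviving first term sums to $\sum_k c(u_{jk} \ot u^*_{lk})$, which is exactly the right-hand side of \eqref{eq_z1}, so the identity $A_{jl}=\sum_k c(u_{jk}\ot u^*_{lk})$ drops out. For \eqref{eq_z2} I would repeat the scheme, applying $\partial c=0$ to $(u^*_{jp}, u_{kp}, a)$ with the weight $\tfrac{Q_j}{Q_p}$ and summing over $p$ (using the relation $\sum_p \tfrac{Q_j}{Q_p} u^*_{jp} u_{kp}=\delta_{jk} 1$ of \eqref{eqn_rel_Au}), then substituting $a=u^*_{km}$ and summing over $k$ with the weight $\tfrac{Q_k}{Q_m}$ (using $\sum_k \tfrac{Q_k}{Q_m} u_{kp} u^*_{km}=\delta_{pm} 1$, the other relation of \eqref{eqn_rel_Au}); both product terms again die by normalisation, and what remains is precisely $B_{jm}=\sum_k \tfrac{Q_k}{Q_m} c(u_{kj} \ot u^*_{km})$.

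The computation itself is routine once the scheme is set up; the real content, and the point that will require care, is the choice of the two triples together with the matching weights, engineered so that after the first summation one product collapses to a multiple of $1$ and after the second substitution the other product does too, each reducing to $0$ by normalisation. The hard part will be the bookkeeping for \eqref{eq_z2}, where the weight in the second summation ($\tfrac{Q_k}{Q_m}$) differs from the one in the first ($\tfrac{Q_j}{Q_p}$), and one must verify that these are exactly the weights occurring in the two forms of relation \eqref{eqn_rel_Au} so that the collapse to $\delta\cdot 1$ goes through.
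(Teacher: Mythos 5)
Your proof is correct and follows essentially the same route as the paper's: both evaluate $\partial c=0$ on suitably weighted sums of elementary tensors of generators and use the relations \eqref{eqn_rel_Au} together with the normalisation of $c$ to collapse the product terms. The only cosmetic difference is that you argue in two stages (an intermediate identity with a free third argument, then specialisation to $a=u^*_{lk}$, resp.\ $a=u^*_{km}$, and a weighted sum), while the paper evaluates $\partial c$ in one shot on the double sums $\sum_{p,r}u_{jp}\otimes u^*_{rp}\otimes u_{rk}$ and $\sum_{p,r}\tfrac{Q_p}{Q_r}\,u_{pj}\otimes u^*_{pr}\otimes u_{kr}$.
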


\begin{proof}
The equalities follow from computing the expression for $\partial c$ on $\sum_{p,r=1}^d u_{jp}\otimes u_{rp}^* \otimes u_{rk}$ and $\sum_{p,r=1}^d \frac{Q_p}{Q_r} u_{pj}\otimes u_{pr}^* \otimes u_{kr}$ and using the fact that $\partial c=0$. The first one was shown in \cite[Lemma 4.3]{DFKS18}.

Fix then $j,k=1,\ldots,d$. To see that the two prescriptions for $B_{jk}$ agree, we use the relations \eqref{eqn_rel_Au} and the normalization of $c$:
	\begin{align*}
	0=&\sum_{p,r=1}^d \frac{Q_p}{Q_r} \partial c(u_{pj}\otimes u_{pr}^* \otimes u_{kr}) =
	\\
	=& \sum_{p,r=1}^d \frac{Q_p}{Q_r} \left[ \e(u_{pj}) c( u_{pr}^* \otimes u_{kr}) -
	c(u_{pj}u_{pr}^* \otimes u_{kr})+
	c(u_{pj}\otimes u_{pr}^* u_{kr})-
	c(u_{pj}\otimes u_{pr}^*) \e(u_{kr}) \right]
	\\
	=& \sum_{r=1}^d \frac{Q_j}{Q_r} c( u_{jr}^* \otimes u_{kr}) -
	\sum_{r=1}^d  c(\underbrace{\sum_{p=1}^d \frac{Q_p}{Q_r} u_{pj}u_{pr}^*}_{=\delta_{jr}1} \otimes u_{kr})+
	\sum_{p=1}^d  c(u_{pj}\otimes \underbrace{\sum_{r=1}^d \frac{Q_p}{Q_r}  u_{pr}^* u_{kr}}_{=\delta_{pk}1}) \\
	&\;\;\;\;-	\sum_{p=1}^d \frac{Q_p}{Q_r}  c(u_{pj}\otimes u_{pk}^*)
	\\
	=& \sum_{r=1}^d \frac{Q_j}{Q_r}c( u_{jr}^* \otimes u_{kr}) -
	\sum_{r=1}^d  \underbrace{c(1 \otimes u_{kr})}_{=0}+
	\sum_{p=1}^d  \underbrace{c(u_{pj}\otimes 1)}_{=0}-
	\sum_{p=1}^d \frac{Q_p}{Q_k} c(u_{pj}\otimes u_{pk}^*)
	\\
	=& \sum_{r=1}^d \frac{Q_j}{Q_r}c( u_{jr}^* \otimes u_{kr}) -
	\sum_{p=1}^d \frac{Q_p}{Q_k} c(u_{pj}\otimes u_{pk}^*).
	\end{align*}\end{proof}

The next proposition allows us to identify the coboundaries, using Lemma \ref{lem_BFG}.


\begin{prop} \label{lem_coboundary_condition}
Let $c\in Z^2(U_Q^+)$ be a normalised $2$-cocycle. Then the following conditions are equivalent:
\begin{itemize}
\item[(i)] $c$  is a coboundary;
\item[(ii)] for any $j, k=1,\ldots,d$ such that $Q_j= Q_k$ we have $A_{jk}=B_{kj}$, where the latter numbers were defined in the previous lemma  -- in particular $A_{kk} = B_{kk}$.
\end{itemize}
\end{prop}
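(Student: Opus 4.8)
The key tool is Lemma \ref{lem_BFG}: a normalised cocycle $c$ is a coboundary precisely when there exists a linear functional $\psi:\AuQ \to \C$ with $\psi(1)=0$ and $c=-\partial\psi$, and this holds if and only if the block-triangular map $T_{c,\psi}$ is a homomorphism. So the plan is to show that condition (ii) is exactly the obstruction to being able to extend a consistent choice of $\psi$ on the generators to an algebra homomorphism $T_{c,\psi}$, equivalently to solving $c=-\partial\psi$.

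First I would make the implication (i)$\Rightarrow$(ii) the easy direction. If $c=-\partial\psi$ for some $\psi$ with $\psi(1)=0$, I would substitute this into the defining sums \eqref{eq_ajk} and \eqref{eq_bjk} for $A_{jk}$ and into \eqref{eq_z2} for $B_{kj}$, and compute directly. Expanding $-\partial\psi$ on $u^*_{pj}\otimes u_{pk}$ gives terms $\psi(u^*_{pj})\e(u_{pk})$, $-\psi(u^*_{pj}u_{pk})$, $\e(u^*_{pj})\psi(u_{pk})$; summing over $p$ and using the relations \eqref{eqn_rel_Au} should collapse $A_{jk}$ to something expressed via $\psi$ evaluated on the "diagonal" element produced by the unitarity relation. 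The same computation applied to $B_{kj}$ via \eqref{eq_z2} should produce the \emph{same} scalar whenever $Q_j=Q_k$, because only in that case the weights $Q_p/Q_k$ appearing in the two relations of \eqref{eqn_rel_Au} line up so that the two unitarity constraints coincide. Thus the constraint $A_{jk}=B_{kj}$ for $Q_j=Q_k$ falls out automatically, with the special case $A_{kk}=B_{kk}$ being the diagonal instance.

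For the converse (ii)$\Rightarrow$(i), which I expect to be the substantive direction, I would try to \emph{construct} the functional $\psi$. The natural guess is to set $\psi(u_{jk})$ and $\psi(u^*_{jk})$ so that $T_{c,\psi}$ respects the relations (R1)/\eqref{eqn_rel_Au}; by universality, defining $T_{c,\psi}$ on the generators consistently with these relations extends it to an algebra homomorphism on all of $\AuQ$, and Lemma \ref{lem_BFG} then yields $c=-\partial\psi$. The requirement that $T_{c,\psi}$ be multiplicative on the two families of quadratic relations in \eqref{eqn_rel_Au} translates, after reading off the $(1,3)$-corner of the block matrices, into a finite linear system for the unknowns $\psi(u_{jk}), \psi(u^*_{jk})$ whose inhomogeneous data are exactly the quantities $A_{jk}$ and $B_{jk}$. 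The compatibility (solvability) condition for this system is precisely (ii): $A_{jk}=B_{kj}$ on each block where $Q_j=Q_k$.

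\textbf{Main obstacle.} The hard part will be organising the bookkeeping in the converse direction: verifying that $T_{c,\psi}$ is a homomorphism requires checking the $(1,3)$-entries of $T_{c,\psi}$ applied to \emph{both} relations of \eqref{eqn_rel_Au} (the $uu^*$ and the weighted $u^*u$ relations), and confirming that these are the \emph{only} obstructions — i.e. that once they hold, the remaining matrix entries are automatically consistent. One must be careful that the off-diagonal constraints ($Q_j\neq Q_k$) impose no further condition; here the $Q$-matrix rigidity encoded in Lemmas \ref{lem_QRV} and \ref{lem_QRVf} should guarantee that the relevant cross-block terms either vanish or are unconstrained, so that only the equal-eigenvalue blocks contribute genuine compatibility equations. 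Keeping track of the weights $Q_p/Q_k$ versus $Q_j/Q_p$ throughout, and pinning down exactly which index pairs yield independent constraints, is where the care is needed.
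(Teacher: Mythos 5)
Your proposal is correct and follows essentially the same route as the paper's proof: the forward direction by substituting $c=-\partial\psi$ into the defining sums for $A_{jk}$ and $B_{kj}$ and collapsing them via \eqref{eqn_rel_Au}, and the converse by choosing $\psi$ on the generators so that the $(1,3)$-entries of $T_{c,\psi}$ applied to the quadratic relations vanish, then invoking universality and Lemma \ref{lem_BFG} (the paper writes the solution for $\psi$ out explicitly, treating the cases $Q_j\neq Q_k$ and $Q_j=Q_k$ separately). One small correction to your ``main obstacle'' paragraph: Lemmas \ref{lem_QRV} and \ref{lem_QRVf} play no role here, since they concern Hilbert-space-valued $1$-cocycles; the reason index pairs with $Q_j\neq Q_k$ impose no constraint is elementary linear algebra, namely that the $2\times 2$ system for $\bigl(\psi(u_{jk}),\psi(u_{kj}^*)\bigr)$ with coefficient matrix $\left(\begin{smallmatrix}1 & 1\\ 1 & Q_k/Q_j\end{smallmatrix}\right)$ is invertible precisely when $Q_k\neq Q_j$, so only the degenerate equal-eigenvalue blocks produce the compatibility condition (ii).
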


\begin{proof} 
	($\Rightarrow$) Assume that $c$ is a coboundary, i.e.\ there exists a functional $\psi:A\to \C$  such that 
	$c=\partial (-\psi)$ and $\psi(1)=0$.  Fix $j,k=1,\ldots,d$ and  let us denote for short
	\begin{equation}
	\lambda_{jk}:=\psi (u_{jk}), \quad \mu_{jk}:=\psi (u_{jk}^*).
	\end{equation}
	Then
	\begin{align*}
	A_{jk} &=\sum_{p=1}^d c(u^*_{pj} \otimes u_{pk}) = \sum_{p=1}^d \partial (-\psi) (u^*_{pj} \otimes u_{pk}) 
	= \sum_{p=1}^d [ -\delta_{pj} \psi (u_{pk}) -  \psi (u^*_{pj})\delta_{pk}+  \psi (u^*_{pj} u_{pk})]
	\\ &= - \lambda_{jk} -  \mu_{kj} +  \psi (\sum_{p=1}^d u^*_{pj} u_{pk}) = -\lambda_{jk} -\mu_{kj},
\end{align*}
\begin{align*}
	B_{jk}& = \sum_{p=1}^d \frac{Q_{j}}{Q_{p}}\ c(u^*_{jp} \otimes u_{kp}) 
	= \sum_{p=1}^d \frac{Q_{j}}{Q_{p}}\ \partial (-\psi) (u_{jp}^* \otimes u_{kp}) 
	\\ & = \sum_{p=1}^d \frac{Q_{j}}{Q_{p}}\ [ -\delta_{jp} \psi (u_{kp}) -  \psi (u_{jp}^*)\delta_{kp}
	+ \psi (u_{jp}^* u_{kp})]
	\\ & = -\frac{Q_{j}}{Q_{j}}\ \psi (u_{kj}) -  
	\frac{Q_{j}}{Q_{k}}\ \psi (u_{jk}^*)
	+ \psi (\sum_{p=1}^d \frac{Q_{j}}{Q_{p}}\ u_{jp}^* u_{kp})
	= -\lambda_{kj} - \frac{Q_{j}}{Q_{k}}\ \mu_{jk}.
	\end{align*}
	This implies that 
	\begin{align}
	\label{eq_A_relation}
	A_{jk} +\lambda_{jk} +  \mu_{kj}&= 0, \\
	\label{eq_B_relation}
	B_{kj}+ \lambda_{jk} + \frac{Q_{k}}{Q_{j}}\ \mu_{kj}&= 0.
	\end{align}
	Hence
	\begin{align*}
	\mu_{kj} &= -A_{jk}-\lambda_{jk}, \\
	B_{kj}& =(\frac{Q_{k}}{Q_{j}} -1) \lambda_{jk} + \frac{Q_{k}}{Q_{j}} A_{jk}.
	\end{align*}
	From the last line we compute that 
	\begin{equation}
	(1- \frac{Q_{k}}{Q_{j}})\lambda_{jk} = \frac{Q_{k}}{Q_{j}}\ A_{jk}-B_{kj}.
	\end{equation}
	We see that if $Q_{k}=Q_{j}$, then the left hand side vanishes and hence we must have
	$B_{kj} = A_{jk}$. Thus this condition is necessary. 
	
	\medskip
($\Leftarrow$) Assume that (ii) holds. 	Motivated by the calculations from the first part of the theorem 
	we set for arbitrary $j,k=1,\ldots,d$
	\begin{align} \label{eq_functional_definition}
	\psi(u_{jk}) &: = \left\{ \begin{array}{ll} 
	-(1- \frac{Q_{k}}{Q_{j}})^{-1}\left( B_{kj} -\frac{Q_{k}}{Q_{j}}\ A_{jk}\right), & \mbox{ if }   Q_{k}\neq Q_{j},\\
	-\frac12 A_{jk}, & \mbox{ if } Q_{k}= Q_{j};
	\end{array}\right. \\
	\psi(u_{kj}^*) &:= \left\{ \begin{array}{ll} 
	(1- \frac{Q_{k}}{Q_{j}})^{-1}\left(B_{kj}-A_{jk}\right), & \mbox{ if }  Q_{k}\neq Q_{j},\\
	-\frac12 A_{jk}, & \mbox{ if } Q_{k}= Q_{j}.
	\end{array}\right. .
	\end{align}
	Note that  \eqref{eq_A_relation} and \eqref{eq_B_relation} are then satisfied.
	
It remains to show that $T=T_{c,\psi}$ as defined before Lemma \ref{lem_BFG} extends to a homomorphism. For that, we need to check that 
	elements $t_{jk}=T_{c,\psi}(u_{jk})$ satisfy the defining relations of $\AuQ$. Then Lemma \ref{lem_BFG} will allow us to conclude that 
	$-\partial \psi = c$.

	We first check that $T$ preserves the isometry relation, i.e.\ for all $j,k=1,\ldots, d$ we have $\sum_{p=1}^d t_{pj}^*t_{pk}=\delta_{jk}I$. 
	Indeed, 
	\begin{align*}
	\sum_{p=1}^d &t_{pj}^*t_{pk}=  \sum_{p=1}^{d}  \ T(u^*_{p j}) T(u_{pk})
	\\
	&= \sum_{p=1}^{d}  \begin{pmatrix} 
	\e(u^*_{p j}) & c(u^*_{p j} \otimes -) & \psi(u^*_{p j}) \\
	0 & u^*_{p j} \cdot - & u^*_{p j}-\e(u^*_{p j}) \\
	0 & 0 & \e(u^*_{p j})
	\end{pmatrix}
	\begin{pmatrix} 
	\e(u_{pk}) & c(u_{pk} \otimes -) & \psi(u_{pk}) \\
	0 & u_{pk} \cdot - & u_{pk}-\e(u_{pk}) \\
	0 & 0 & \e(u_{pk})
	\end{pmatrix}
	\\
	& = \begin{pmatrix} 
	\sum_{p=1}^{d}  \e(u^*_{p j})\e(u_{pk}) & (\star) & (\star \star) \\
	0 & \sum_{p=1}^{d}  u^*_{p j}u_{pk} \cdot - & \sum_{p=1}^{d}  \left( u^*_{p 
		j}[u_{pk}-\e(u_{pk})1] + [u^*_{p j}-\e(u^*_{p j})1]\e(u_{pk})\right)\\
	0 & 0 & \sum_{p=1}^{d}  \e(u^*_{p j})\e(u_{pk})
	\end{pmatrix}
	\\
	& = \begin{pmatrix} 
	\e(\sum_{p=1}^{d} u^*_{p j}u_{pk}) & (\star) & (\star \star) \\
	0 & \sum_{p=1}^{d}  u^*_{p j}u_{pk} \cdot - & \sum_{p=1}^{d}  \left( u^*_{p 
		j}[u_{pk}-\e(u^*_{p j})1]\e(u_{pk})\right)\\
	0 & 0 &  \e(\sum_{p=1}^{d} u^*_{p j}u_{pk})
	\end{pmatrix}
	\\
	& 
	= \begin{pmatrix} 
	\delta_{jk} & (\star) & (\star \star) \\
	0 & \delta_{jk} 1 \cdot - & 0\\
	0 & 0 & \delta_{jk}
	\end{pmatrix}.
	\end{align*}
	We are left to check that $ (\star) =0$ and that $ (\star\star) =0$. The first 
	fact holds due to the cocycle property, the normalization $c(1\otimes a)=0$, 
	and the fact that arguments are taken from $\textup{ker}\, \e$, since
	$$ (\star) =\sum_{p=1}^{d} \left( \e(u^*_{p j})c(u_{pk} \otimes -)+ c(u^*_{p j} 
	\otimes  u_{pk} \cdot -)\right)
	= \sum_{p=1}^{d} \left( c(u^*_{p j}u_{pk}\otimes -)+ c(u^*_{p j} 
	\otimes  u_{pk} ) \e(-)\right) = 0.
	$$
	The second formula is also true, because of \eqref{eq_functional_definition}:
	\begin{align*}
	(\star\star) & =  \sum_{p=1}^{d} \left(\e(u^*_{p j})\psi(u_{pk})+ c(u^*_{p j} 
	\otimes [u_{pk}-\e(u_{pk})1]) + \psi(u^*_{p j})\e(u_{pk})\right) 
	\\ & = \psi(u_{jk})+ \sum_{p=1}^{d} c(u^*_{p j} \otimes u_{pk}) + 
	\psi(u^*_{k j})
	= \lambda_{jk}+A_{jk}+\mu_{kj}\stackrel{\eqref{eq_A_relation}}{=}0.
	\end{align*}
	
	Next, we verify that $T$ preserves $u^tQ\bar{u}Q^{-1}=I$, i.e.\ 
	$\sum_{p=1}^{d}\frac{Q_{p}}{Q_{k}} t_{p j}t^*_{pk}=\delta_{jk}I$ for all $j,k=1,\ldots, d$. 
	We have
	\begin{align*}
	\sum_{p=1}^d & \frac{Q_{p}}{Q_{k}} t_{pj}t_{pk}^*=  \sum_{p=1}^{d}  \ \frac{Q_{p}}{Q_{k}} T(u_{p j}) T(u_{pk}^*)
	\\
	&= \sum_{p=1}^{d} \frac{Q_{p}}{Q_{k}} \begin{pmatrix} 
	\e(u_{p j}) & c(u_{p j} \otimes -) & \psi(u_{p j}) \\
	0 & u_{p j} \cdot - & u_{p j}-\e(u_{p j})1 \\
	0 & 0 & \e(u_{p j})
	\end{pmatrix}
	\begin{pmatrix} 
	\e(u_{pk}^*) & c(u_{pk}^* \otimes -) & \psi(u_{pk}^*) \\
	0 & u_{pk}^* \cdot - & u_{pk}^*-\e(u_{pk}^*)1 \\
	0 & 0 & \e(u_{pk}^*)
	\end{pmatrix}
	\\
	& = \begin{pmatrix} 
	\e(\sum_{p=1}^{d} \frac{Q_{p}}{Q_{k}}u_{p j}u_{pk}^*) & (\diamond) & (\diamond\diamond) \\
	0 & \sum_{p=1}^{d} \frac{Q_{p}}{Q_{k}} u_{p j}u_{pk}^* \cdot - & \sum_{p=1}^{d} \frac{Q_{p}}{Q_{k}} \left( u_{p 
		j}u_{pk}^*-\e(u_{p j}1)\e(u_{pk}^*)\right)\\
	0 & 0 &  \e(\sum_{p=1}^{d}\frac{Q_{p}}{Q_{k}} u_{p j}u_{pk}^*)
	\end{pmatrix}
	\\
	& 
= \begin{pmatrix} 
\delta_{jk} & (\diamond) & (\diamond\diamond) \\
0 & \delta_{jk} 1 \cdot - & 0\\
0 & 0 & \delta_{jk}
\end{pmatrix}
\end{align*}
and, as above, 
\begin{align*} (\diamond) &=\sum_{p=1}^{d} \frac{Q_{p}}{Q_{k}}\left( \e(u_{p j})c(u_{pk}^* \otimes -)+ c(u_{p j} 
\otimes  u_{pk}^* \cdot -)\right)
\\&= \sum_{p=1}^{d} \left( c(\frac{Q_{p}}{Q_{k}}u_{p j}u_{pk}^*\otimes -)+ \frac{Q_{p}}{Q_{k}} c(u_{p j} 
	\otimes  u_{pk}^* ) \e(-)\right) = 0.
	\end{align*}
	and 
	\begin{align*}
	(\diamond\diamond) & =  \sum_{p=1}^{d} \frac{Q_{p}}{Q_{k}} \left(\e(u_{pj})\psi(u_{pk}^*)+ c(u_{p j} 
	\otimes [u_{pk}^*-\e(u_{pk}^*)]) + \psi(u_{p j})\e(u_{pk}^*)\right) 
	\\ & = \frac{Q_{j}}{Q_{k}}\psi(u_{jk}^*)+ \sum_{p=1}^{d} \frac{Q_{p}}{Q_{k}}c(u_{p j} \otimes u_{pk}^*) + 
	\frac{Q_{k}}{Q_{k}} \psi(u_{k j})
	= \frac{Q_{j}}{Q_{k}}\mu_{jk}+ B_{jk} + \lambda_{k j} \stackrel{\eqref{eq_B_relation}}{=}0.
	\end{align*}
	
	Using the same method we check that the other relations of $\AuQ$ are also satisfied; we leave the details to the reader. 
\end{proof}

%
%

We are ready to define the defect map, following the construction in \cite[Subsection 4.1]{DFKS18}. 

\begin{defin} \label{def:defect}  Let $Q$ be a strictly positive diagonal matrix with $n$ distinct eigenvalues and the corresponding decomposition
$Q=\sum_{i=1}^n q_i P_i$.	
The defect map $\Delta: Z^2(U_Q^+) \to M_d$ is defined as follows: first for each $c \in Z^2(U_Q^+)$ consider a matrix $D(c)$ given by the formula 
\[ D(c)_{jk} = A_{jk} - B_{kj}, \;\;\;\;j,k=1,\ldots,d \]	
where the coefficients $A_{jk}$ and $B_{jk}$ are defined as in Lemma \ref{lem_cocycle_condition}
and then put 
\[ \Delta(c) = \sum_{i=1}^n P_i D(c) P_i.\]
\end{defin}

It is easy to see that $\Delta$ is a linear map. Its importance stems from the next corollary (of Proposition \ref{lem_coboundary_condition}).

\begin{cor} \label{cor:H2}
We have the following isomorphism:
\[ H^2(U_Q^+) \simeq\Delta(Z^2(U_Q^+)).\]
\end{cor}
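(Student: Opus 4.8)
The plan is to show that the defect map $\Delta$ descends to an isomorphism from $H^2(U_Q^+)$ onto its image $\Delta(Z^2(U_Q^+))$. The key point is that Proposition \ref{lem_coboundary_condition} characterises coboundaries, and I want to reinterpret its condition (ii) as precisely the statement $\Delta(c)=0$. First I would reduce to normalised cocycles: every cohomology class has a normalised representative, since $c(1\otimes 1)$ can be removed by subtracting an appropriate coboundary (indeed, as noted at the start of the subsection, cup-product cocycles are automatically normalised, and in general one checks that $B^2(U_Q^+)$ contains enough coboundaries to normalise). Thus I may work with the restriction of $\Delta$ to the space of normalised $2$-cocycles, which still surjects onto $\Delta(Z^2(U_Q^+))$.

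The central step is to identify $\ker \Delta$ (intersected with the normalised cocycles) with $B^2(U_Q^+)$. Recall $\Delta(c) = \sum_{i=1}^n P_i D(c) P_i$, where $D(c)_{jk} = A_{jk}-B_{kj}$, so $\Delta(c)$ retains only those entries $D(c)_{jk}$ for which $j$ and $k$ lie in the same eigenblock of $Q$, i.e.\ for which $Q_j=Q_k$. Hence $\Delta(c)=0$ if and only if $A_{jk}=B_{kj}$ for all $j,k$ with $Q_j=Q_k$, which is exactly condition (ii) of Proposition \ref{lem_coboundary_condition}. By that proposition, for a normalised cocycle this holds if and only if $c$ is a coboundary. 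Therefore $\ker \Delta \cap Z^2_{\textup{norm}}(U_Q^+) = B^2(U_Q^+)$.

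With this in hand the isomorphism follows from the first isomorphism theorem for linear maps: the linear map $\Delta$ restricted to normalised cocycles has image $\Delta(Z^2(U_Q^+))$ and kernel $B^2(U_Q^+)$, so it induces a linear isomorphism
\[
Z^2_{\textup{norm}}(U_Q^+)/B^2(U_Q^+) \;\xrightarrow{\ \sim\ }\; \Delta(Z^2(U_Q^+)).
\]
Since every class in $H^2(U_Q^+)=Z^2(U_Q^+)/B^2(U_Q^+)$ has a normalised representative, the left-hand quotient is canonically $H^2(U_Q^+)$, giving the claimed isomorphism $H^2(U_Q^+)\simeq \Delta(Z^2(U_Q^+))$.

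The main obstacle I anticipate is the bookkeeping in the normalisation step: one must verify both that every cohomology class admits a normalised representative and that restricting to normalised cocycles does not shrink the image of $\Delta$ nor enlarge its kernel beyond $B^2$. The delicate point is that $\Delta$ is defined on all of $Z^2(U_Q^+)$ via $A_{jk},B_{jk}$, yet Proposition \ref{lem_coboundary_condition} is stated only for normalised $c$; so I must confirm that passing between $c$ and its normalisation leaves $\Delta(c)$ unchanged (which is plausible since the correction is a coboundary, and coboundaries should lie in $\ker\Delta$), thereby ensuring $\Delta$ is genuinely well-defined on cohomology. Once that compatibility is checked, everything else is the routine first-isomorphism-theorem argument.
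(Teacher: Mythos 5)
Your argument is essentially the paper's own proof: reduce to normalised cocycles, use Proposition \ref{lem_coboundary_condition} to identify vanishing of $\Delta$ with being a coboundary, and conclude by the first isomorphism theorem. The compatibility point you flag at the end is real but settles immediately: a direct computation for $c=\partial\psi$ gives $D(\partial\psi)_{jk}=\bigl(1-\tfrac{Q_k}{Q_j}\bigr)\psi(u_{kj}^*)$, which vanishes whenever $Q_j=Q_k$, so $\Delta$ annihilates \emph{all} coboundaries (normalised or not) and its image over $Z^2(U_Q^+)$ coincides with its image over the normalised cocycles -- a detail the paper leaves implicit as well.
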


\begin{proof}
It is well-known and easy to show (see for example \cite[Lemma 4.4]{DFKS18}) that when computing the second cohomology group one can consider only normalised cocycles. Proposition \ref{lem_coboundary_condition} shows that a normalised cocycle $c \in Z^2(U_Q^+)$ is a coboundary if and only if $\Delta(c)=0$. Then the statement becomes a consequence of the first isomorphism theorem.
\end{proof}

It thus remains to understand the image of the defect map. Before we formulate the general result, recall the formula \eqref{slQd}.

\begin{thm}\label{thm:image}
 Let $Q\in M_d(\C)$ be a strictly positive diagonal matrix with eigenspaces of dimensions $(d_1, \ldots, d_n)$. Then 
$$sl(d_1)\oplus \ldots \oplus sl(d_n) \subset \Delta(Z^2(U_Q^+))\subset 
 sl_Q(d).$$ 	
\end{thm}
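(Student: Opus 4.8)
The plan is to prove the two inclusions separately. The upper inclusion $\Delta(Z^2(U_Q^+)) \subseteq sl_Q(d)$ reduces to checking the three defining conditions in \eqref{slQd}, and the lower inclusion is obtained by pulling back the known $2$-cocycles of the free building blocks $U^+_{d_i}$ along the homomorphisms $\pi_i$ and computing their defect. The comparatively short part is the upper inclusion, which rests on combining cleverly the pairs of expressions for the defect coefficients furnished by Lemma \ref{lem_cocycle_condition}.

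For the upper inclusion, observe first that by Definition \ref{def:defect} the matrix $\Delta(c)=\sum_i P_i D(c) P_i$ is block-diagonal with respect to the eigenspace decomposition of $Q$, hence commutes with $Q$; this already yields the relation $\Delta(c)Q=Q\Delta(c)$. Since $\Delta(c)$ and $D(c)$ share the same diagonal and $Q$ is diagonal, it then suffices to show $\Tr(D(c)Q)=\Tr(D(c)Q^{-1})=0$, i.e.\ $\sum_j Q_j^{\pm 1}(A_{jj}-B_{jj})=0$. Here the idea is to use the \emph{two} available expressions for each coefficient. For $\Tr(D(c)Q)$ I would substitute \eqref{eq_z1} and \eqref{eq_z2}: weighting $A_{jj}=\sum_p c(u_{jp}\otimes u_{jp}^*)$ by $Q_j$ and $B_{jj}=\sum_p \frac{Q_p}{Q_j}c(u_{pj}\otimes u_{pj}^*)$ by $Q_j$ and summing over $j$, both sums equal $\sum_{j,p} Q_j\, c(u_{jp}\otimes u_{jp}^*)$ after relabelling $j\leftrightarrow p$, so their difference vanishes. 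For $\Tr(D(c)Q^{-1})$ I would instead use \eqref{eq_ajk} and \eqref{eq_bjk}: weighting by $Q_j^{-1}$, both sums collapse to $\sum_{j,p} Q_j^{-1} c(u_{pj}^*\otimes u_{pj})$. Hence both traces are zero and $\Delta(c)\in sl_Q(d)$.

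For the lower inclusion I would exploit the surjections $\pi_i: A_u(Q)\to A_u(d_i)$ together with the free-case result of \cite{DFKS18}, namely that for $U^+_{d_i}$ (where the defect map is just $D$, as $P_1=I$) every element of $sl(d_i)$ is realised as $D(c_i)$ for some $c_i\in Z^2(U^+_{d_i})$. Given $M_i\in sl(d_i)$ pick such a $c_i$ and set $\tilde c_i = c_i\circ(\pi_i\otimes\pi_i)\in Z^2(U_Q^+)$, which is again a normalised cocycle since $\pi_i(1)=1$ and $\e=\e\circ\pi_i$. The main task is to compute $\Delta(\tilde c_i)$. Using the explicit form of $\pi_i$ (block generators map to generators, diagonal generators outside the block to $1$, all others to $0$) and the normalisation of $c_i$, one checks that for indices $j,k$ in the same $Q$-block the only surviving terms in $A_{jk}(\tilde c_i)$ and $B_{kj}(\tilde c_i)$ come from summation index $p$ inside block $i$ (where $Q_j/Q_p=1$), giving $A_{jk}(\tilde c_i)=A_{j'k'}(c_i)$ and $B_{kj}(\tilde c_i)=B_{k'j'}(c_i)$ for $j,k$ in block $i$ (with $j',k'$ the local indices), while for $j,k$ in any other block every contribution is killed either by an off-diagonal generator mapping to $0$ or by the normalisation $c_i(1\otimes 1)=0$. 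Since $\Delta$ keeps only the block-diagonal part, this gives $\Delta(\tilde c_i)=M_i$ placed in block $i$ and zero elsewhere; running over all $i$ and invoking linearity of $\Delta$ produces every element of $sl(d_1)\oplus\cdots\oplus sl(d_n)$.

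The trace identities of the upper inclusion are routine once the correct pairing of the two forms of $A_{jj}$ and $B_{jj}$ is spotted, so I expect the main care to be needed in the lower inclusion: the bookkeeping showing that the off-block contributions to the pulled-back defect vanish and that the surviving block-$i$ contributions reproduce $D(c_i)$ exactly, which is precisely where the explicit description of $\pi_i$ and the normalisation of $c_i$ enter. I would also stress that the theorem only asserts the two inclusions; closing the gap between $sl(d_1)\oplus\cdots\oplus sl(d_n)$ and $sl_Q(d)$, which is where the infinite-dimensional representation \eqref{infdimrep} becomes essential, lies beyond its scope.
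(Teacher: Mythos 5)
Your proposal is correct and follows essentially the same route as the paper: the upper inclusion is proved by exactly the same pairing of the two expressions from Lemma \ref{lem_cocycle_condition} (equations \eqref{eq_z1}--\eqref{eq_z2} for $\Tr(\Delta(c)Q)$, and \eqref{eq_ajk}--\eqref{eq_bjk} for $\Tr(\Delta(c)Q^{-1})$), and the lower inclusion by pulling back the explicit cocycles of \cite{DFKS18} along the surjections $\pi_i$. The only difference is that the paper leaves the block-by-block bookkeeping of the pulled-back defect ``to the reader,'' whereas you carry it out explicitly; your verification that off-block contributions vanish via $\pi_i$ and normalisation is precisely the omitted detail.
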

\begin{proof}
We will first show that any matrix 	in $\Delta(Z^2(U_Q^+))$ must belong to $sl_Q(d)$. Let then $c\in Z^2(U_Q^+)$. The fact that $\Delta(c)$ is a $Q$-matrix follows from the definition of the defect map. It remains then to compute the relevant deformed traces:
\begin{align*}
{\rm Tr} (\Delta(c) Q) &= \sum_{k=1}^d (\Delta(c) Q)_{kk} = \sum_{k=1}^d 
Q_k \Delta(c)_{kk} \\& =\sum_{k=1}^d Q_k \left ( 
\sum_{p=1}^d c(u^*_{pk} \otimes u_{pk}) - 
\sum_{p=1}^d \frac{Q_k}{Q_p} c(u^*_{kp} \otimes u_{kp}) \right)
\\&=\sum_{k=1}^d Q_k \left ( 
\sum_{p=1}^d c(u_{kp} \otimes u_{kp}^*) - 
\sum_{p=1}^d \frac{Q_p}{Q_k} c(u_{pk} \otimes u^*_{pk}) \right)
\\ &=
\sum_{p,k=1}^d Q_k c(u_{kp} \otimes u^*_{kp}) - 
\sum_{p,k=1}^d Q_p c(u_{pk} \otimes u^*_{pk})
=0,
\end{align*}
where in the fourth equality we use Lemma \ref{lem_cocycle_condition}. Similarly
\begin{align*}
{\rm Tr} (\Delta(c)Q^{-1}) &= {\rm Tr} (Q^{-1}\Delta(c)) = \sum_{k=1}^d
(Q^{-1}\Delta(c))_{kk} 
\\&= \sum_{k=1}^d \frac{1}{Q_k}   \left ( 
\sum_{p=1}^d c(u^*_{pk} \otimes u_{pk}) - 
\sum_{p=1}^d \frac{Q_k}{Q_p} c(u^*_{kp} \otimes u_{kp}) \right)
\\ & =\sum_{p,k=1}^d 
\frac{1}{Q_k} c(u^*_{pk} \otimes u_{pk}) - \sum_{p,k=1}^d 
\frac{1}{Q_p}c(u^*_{kp} \otimes u_{kp})=0.
\end{align*}

It remains then to prove the first inclusion. We claim that it follows from the proof of \cite[Theorem 4.6]{DFKS18}, where it was shown that $H^2(U^+_d)\simeq sl(d)$, and explicit cocycles realising that isomorphism were constructed,  and the remarks stated in Section \ref{sec:prelim} after the definition of the cohomology groups, noting that we have natural embeddings of $C^2(U^+(d_i))$ into $C^2(U^+(Q))$ for $i=1, \ldots, n$.
We leave verifying the details to the reader.
\end{proof}

Note that for $n \geq 3$  the inclusion $sl(d_1)\oplus \ldots \oplus sl(d_n) \subset 
sl_Q(d)$ is strict, as the dimension of the first space equals $d_1^2+\ldots + d_n^2-n$, and of the second $d_1^2+\ldots + d_n^2-2$.

\subsection*{`Block dimension 2' and defects arising from $1$-cocycles with respect to finite-dimensional representations}

We begin the section by noting that if $Q$ is a matrix with precisely two different eigenvalues, the last results already yield the second cohomology group of $U_Q^+$.

\begin{cor}\label{cor:coh2}
 Let $Q\in M_d(\C)$ be a strictly positive diagonal matrix with eigenspaces of dimensions $(d_1, d_2)$.
Then \[ H^2(U_Q^+) \simeq sl(d_1)\oplus sl(d_2).\]	
\end{cor}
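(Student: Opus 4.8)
The plan is to deduce this corollary almost immediately from Corollary \ref{cor:H2} and Theorem \ref{thm:image}, the only genuine work being a short piece of linear algebra showing that the two bounds in Theorem \ref{thm:image} collapse to a single space when $Q$ has exactly two distinct eigenvalues.

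First I would invoke Corollary \ref{cor:H2} to replace the problem of computing $H^2(U_Q^+)$ by that of identifying the image $\Delta(Z^2(U_Q^+))$. By Theorem \ref{thm:image}, applied with $n=2$, this image is sandwiched as
\[ sl(d_1) \oplus sl(d_2) \subset \Delta(Z^2(U_Q^+)) \subset sl_Q(d), \]
so it suffices to prove that for two blocks the outer space $sl_Q(d)$ coincides with the inner space $sl(d_1) \oplus sl(d_2)$; both inclusions are then forced to be equalities.

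To this end, write $Q = q_1 P_1 + q_2 P_2$ with $q_1 \neq q_2$. Any $Q$-matrix is block-diagonal, $A = A_1 \oplus A_2$, and setting $t_i := \textup{Tr}(A_i)$ the two defining conditions of $sl_Q(d)$ (see \eqref{slQd}) become the homogeneous linear system
\[ q_1 t_1 + q_2 t_2 = 0, \qquad q_1^{-1} t_1 + q_2^{-1} t_2 = 0. \]
The coefficient matrix has determinant $q_1 q_2^{-1} - q_2 q_1^{-1} = (q_1^2 - q_2^2)/(q_1 q_2)$, which is nonzero precisely because $q_1 \neq q_2$; hence $t_1 = t_2 = 0$. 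This shows $sl_Q(d) = sl(d_1) \oplus sl(d_2)$, whence $\Delta(Z^2(U_Q^+)) = sl(d_1) \oplus sl(d_2)$ and the claim follows via Corollary \ref{cor:H2}.

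I do not anticipate any serious obstacle here: the entire content beyond citing the earlier results is the observation that the $2 \times 2$ Vandermonde-type system above is nonsingular. This is exactly the numerical mechanism that breaks down once $n \geq 3$, as recorded in the dimension count following Theorem \ref{thm:image}, where the two trace constraints cut out a subspace of codimension only $2$ (rather than $n$) inside the space of $Q$-matrices. In short, the corollary expresses the fact that the gap between the two bounds of Theorem \ref{thm:image} is invisible in the block-dimension-two case.
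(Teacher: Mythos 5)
Your proposal is correct and follows essentially the same route as the paper: both reduce, via Corollary \ref{cor:H2} and Theorem \ref{thm:image}, to the equality $sl_Q(d) = sl(d_1)\oplus sl(d_2)$ in the two-block case. The paper obtains this equality from the dimension count recorded after Theorem \ref{thm:image}, while you verify it by solving the $2\times 2$ trace system explicitly --- the same nonsingularity fact, just made concrete.
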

\begin{proof}
The result follows from noting that $sl(d_1)\oplus sl(d_2) = sl_Q(d)$ simply because  the dimensions of two spaces match, as noted after the last theorem. Thus Corollary \ref{cor:H2} and Theorem \ref{thm:image} end the proof.	
\end{proof}

Let us then return to a general matrix $Q$, with the eigenspace dimensions $(d_1, \ldots, d_n)$, $n \geq 3$. As our way of constructing explicit $2$-cocycles is based on Lemma \ref{cupproduct}, one might ask whether one can use the cup product of $1$-cocycles with respect to finite-dimensional representations to obtain the `missing' cocycles, for which the defect map takes values outside of $sl(d_1)\oplus \ldots \oplus sl(d_n)$. The next proposition shows that the answer is negative: recall that finite-dimensional representations are automatically $Q$-representations.

\begin{prop}\label{prop:nogo}  Let $ d \in \N$ and let $Q\in M_d(\C)$ be a diagonal strictly positive matrix with the eigenspace dimensions $(d_1, \ldots, d_n)$.
Let $H$ be a Hilbert space, let $\pi:\AuQ \to B(H)$ be a $Q$-representation and let $\eta_1, \eta_2 \in Z_1(\AuQ, _\pi H _\e)$.	Let $c\in C^2(U^+_Q)$ be the cocycle constructed out of $\eta_1, \eta_2$ via Lemma \ref{cupproduct}. Then $\Delta(c) \in sl(d_1)\oplus \ldots \oplus sl(d_n)$.
\end{prop}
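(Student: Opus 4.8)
The plan is to reduce the statement to a single index-relabelling identity, the whole force of which comes from the block-diagonality of the cocycle matrices attached to a $Q$-representation. First I would record the matrices $V_1=[\eta_1(u_{jk})]_{j,k}$ and $V_2=[\eta_2(u_{jk})]_{j,k}$ in $M_d(H)$ associated with $\eta_1,\eta_2$ as in Lemma \ref{lem_QRV}, and note that $c$ (which is a normalised $2$-cocycle by Lemma \ref{cupproduct}) is computed on the relevant generators by unwinding the cup product: for all $p,j,k=1,\ldots,d$,
$$c(u_{pj}^*\otimes u_{pk})=\langle \eta_1((u_{pj}^*)^*),\eta_2(u_{pk})\rangle_H=\langle (V_1)_{pj},(V_2)_{pk}\rangle_H.$$
Substituting this into the definitions of Lemma \ref{lem_cocycle_condition} gives
$$A_{jk}=\sum_{p=1}^d \langle (V_1)_{pj},(V_2)_{pk}\rangle_H,\qquad B_{jk}=\sum_{p=1}^d \frac{Q_j}{Q_p}\langle (V_1)_{jp},(V_2)_{kp}\rangle_H.$$

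The decisive input is Lemma \ref{lem_QRVf}: since $\pi$ is a $Q$-representation, both $V_1$ and $V_2$ are $Q$-matrices, so $(V_\ell)_{pk}=0$ whenever $p$ and $k$ lie in different eigenvalue blocks. By its very construction $\Delta(c)=\sum_i P_iD(c)P_i$ is already a $Q$-matrix, so membership in $sl(d_1)\oplus\cdots\oplus sl(d_n)$ is equivalent to the tracelessness of each diagonal block; that is, I would have to show that $\sum_{k:\,Q_k=q_i}D(c)_{kk}=0$ for every $i=1,\ldots,n$, where $D(c)_{kk}=A_{kk}-B_{kk}$.

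To establish this I would compute each block-sum directly. In $A_{kk}=\sum_p\langle (V_1)_{pk},(V_2)_{pk}\rangle_H$ the block-diagonality of $V_1$ forces $p$ to range only over the block containing $k$; the same restriction applies in $B_{kk}$, and on that block one has $Q_p=Q_k=q_i$, so the weight $Q_k/Q_p$ collapses to $1$. Writing $\sum_{k,p\in i}$ for summation over the indices of the $i$-th block, I then obtain
$$\sum_{k:\,Q_k=q_i} A_{kk}=\sum_{k,p\in i}\langle (V_1)_{pk},(V_2)_{pk}\rangle_H,\qquad \sum_{k:\,Q_k=q_i} B_{kk}=\sum_{k,p\in i}\langle (V_1)_{kp},(V_2)_{kp}\rangle_H,$$
and interchanging the dummy indices $p\leftrightarrow k$ in the second sum (both run over the same block) identifies it with the first. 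Hence the two block-sums coincide, $\sum_{k:\,Q_k=q_i}D(c)_{kk}=0$, and the claim follows.

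The computation itself is entirely routine; the only substantive point — and the reason the proposition is true at all — is the collapse of the weight $Q_k/Q_p$ to $1$ on the support of the block-diagonal matrices $V_1,V_2$, which is precisely what the $Q$-representation hypothesis (via Lemma \ref{lem_QRVf}) guarantees. This is exactly where the assumption is indispensable: for a genuinely non-$Q$ representation the matrices $V_\ell$ need not be block-diagonal, the weight no longer trivialises, and the cancellation in the block-traces breaks down. This failure is, of course, the very phenomenon that later forces the use of the infinite-dimensional representation \eqref{infdimrep} in order to realise defects lying outside $sl(d_1)\oplus\cdots\oplus sl(d_n)$.
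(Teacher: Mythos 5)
Your proof is correct and follows essentially the same route as the paper's: both invoke Lemma \ref{lem_QRVf} to get block-diagonality of the cocycle matrices, unwind the cup product into inner products, observe that the weight $Q_k/Q_p$ collapses to $1$ on the support of the blocks, and kill the block traces by a dummy-index swap. The only (harmless) difference is that the paper additionally notes $D(c)_{jk}=0$ for $Q_j\neq Q_k$, so that $\Delta(c)=D(c)$, whereas you bypass this by using that $\Delta(c)=\sum_i P_iD(c)P_i$ is block-diagonal by construction.
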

\begin{proof}
This is essentially a consequence of Lemma	\ref{lem_QRVf} and the definition of the defect map. Indeed, let $V, W\in M_d(H)$ be matrices corresponding to cocycles $\eta_1, \eta_2$ via Lemma \ref{lem_QRV}. Then for each $j,k =1,\ldots,d$ we have (see Definition \ref{def:defect}) 
\begin{align}\label{Dform}  D(c)_{jk} &= \sum_{p=1}^d \left(c(u^*_{pj} \otimes u_{pk}) - \frac{Q_k}{Q_p}c(u^*_{kp} \otimes u_{jp}) \right) = 
\sum_{p=1}^d \left(\langle V_{pj}, W_{pk}\rangle - \frac{Q_k}{Q_p}\langle V_{kp}, W_{jp}\rangle \right). 
 \end{align}
Due to Lemma \ref{lem_QRVf} both $V$ and $W$ are $Q$-matrices. This means that the above reduces to
\[ D(c)_{jk} = \sum_{p=1}^d \left(\langle V_{pj}, W_{pk}\rangle - \langle V_{kp}, W_{jp}\rangle \right) \textup{ if } Q_j =Q_k,\]
and $D(c)_{jk}=0$ if $Q_j \neq Q_k$. Thus $\Delta(c)=D(c)$, and finally if we compute the partial trace at a given block (say corresponding to the eigenvalue $q_l$: we put $J=\{k=1,\ldots, d: Q_k = q_l\}$) we get
\[ \textup{Tr}_l (\Delta(c)) := \sum_{k\in J} \Delta(c)_{kk} = 
\sum_{k\in J} \sum_{p \in J}  \left(\langle V_{pk}, W_{pk}\rangle - \langle V_{kp}, W_{kp}\rangle \right) =0.
\]
\end{proof}

\subsection*{The `non-degenerate' case in dimension 3}
The considerations in the last subsection show that the next case to study concerns a matrix $Q$ with three distinct eigenvalues. Proposition \ref{prop:nogo} tells us that to produce the `missing' $2$-cocycles via Lemma \ref{cupproduct} we need to consider infinite-dimensional representations.

Thus throughout this subsection we assume that $d=3$, and the matrix $Q$ has 3 distinct eigenvalues, so that we may consider without loss of generality $Q={\rm diag}\ (1,p^2,q^2)$ with 
$0<q<p<1$. We also fix throughout the representation $\pi:\AuQ \to B(\ell^2(\N_0))$  associated with the matrix
\begin{equation} \label{formula:Keyrep} R=\begin{pmatrix}
\alpha & 0 & -q\gamma^* \\ 0 & I & 0 \\ \gamma & 0 & \alpha^*
\end{pmatrix}, \end{equation}
where $\alpha$ and $\gamma$ denote the images of generators of the standard infinite-dimensional representation of $\Pol(SU_q(2))$, as discussed before the formula \eqref{infdimrep}. We begin by analysing the $1$-cocycles in $Z_1(\AuQ, _\pi \ell^2(\N_0) _\e)$.

\begin{lemma}\label{lem:cocinf}
A matrix $V=(v_{ij})_{i,j=1}^3 \in M_3(\ell^2(\N_0))$ yields a cocycle in $Z_1(\AuQ, _\pi \ell^2(\N_0) _\e)$ (via Lemma \ref{lem_QRV}) if and only if 
\begin{equation} \label{eq_set_of_equations_for_eta_infty_v12}
\begin{cases}
\left[(1+ \frac{q^3}{p^4})I-\frac1{p^2}\alpha - \frac{q^3}{p^2}\alpha^* \right]v_{12}=0 \\ 
\gamma^*v_{32}= (\frac1{p^2}- \alpha^*)v_{12} 
\end{cases}, 
\end{equation}
\begin{equation} \label{eq_set_of_equations_for_eta_infty_v2}
\begin{cases}
[(1+\frac{p^4}{q})I- \frac{p^2}{q^2}\alpha  -qp^2 \alpha^*]v_{23} =0\\
\gamma^*v_{21} =\frac1{p^2} (I-\frac{p^2}{q^2}\alpha)v_{23}
\end{cases} 
\end{equation}
and 
\begin{equation}\label{eq-set-suq2}
\begin{cases}
\alpha^*v_{11} -\frac1{q}\gamma v_{13} =\alpha^*v_{11}+ \gamma^*v_{31} \\
q^2 \alpha^*v_{31} -q\gamma v_{33} = -q\gamma v_{11}+  \alpha v_{31}\\
\gamma^*v_{11} + \frac1{q^2} \alpha v_{13} = \alpha^*v_{13}+ \gamma^*v_{33} \\
q^2\gamma^*v_{31} + \alpha v_{33} =-q\gamma v_{13}+  \alpha v_{33}
\end{cases}.
\end{equation}
\end{lemma}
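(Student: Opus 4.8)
The statement is precisely the unfolding of the cocycle criterion of Lemma \ref{lem_QRV} for the particular representation \eqref{formula:Keyrep}, so the plan is to expand the single matrix identity \eqref{eq_R*V=}, namely $(R^*V)^t=\bar{R}Q^{-1}V^tQ$ with $R$ as in \eqref{formula:Keyrep} and $Q=\mathrm{diag}(1,p^2,q^2)$, into its nine scalar entries and then regroup these into the three displayed systems. First I would simply write out both sides. Since the only nontrivial entries of $R$ occupy the corner positions $(1,1),(1,3),(3,1),(3,3)$ (where they equal $\alpha,-q\gamma^*,\gamma,\alpha^*$) with an identity in the $(2,2)$ slot, both $(R^*V)^t$ and $\bar{R}Q^{-1}V^tQ$ are immediate to compute, the factors $Q^{\pm1}$ merely inserting the scalars $p^{\pm2},q^{\pm2}$ into the appropriate places. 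Matching the nine entries then produces nine vector equations in the $v_{ij}\in\ell^2(\N_0)$.

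These equations separate according to the block structure of $Q$. The four entries in positions $(1,1),(1,3),(3,1),(3,3)$, which couple only the corner variables $v_{11},v_{13},v_{31},v_{33}$, give after cancelling the terms common to both sides exactly the four lines of \eqref{eq-set-suq2}; the $(2,2)$ entry collapses to the tautology $v_{22}=v_{22}$ and imposes nothing (consistently, $v_{22}$ never appears in the conclusion). Of the four remaining ``cross'' entries, the $(2,1)$ and $(3,2)$ positions reproduce verbatim the second lines of \eqref{eq_set_of_equations_for_eta_infty_v12} and \eqref{eq_set_of_equations_for_eta_infty_v2} respectively.

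The only non-routine work is to recover the first line of each of \eqref{eq_set_of_equations_for_eta_infty_v12} and \eqref{eq_set_of_equations_for_eta_infty_v2}; these are not matched entries but arise by eliminating an auxiliary variable with the $\Pol(SU_q(2))$ relations. For \eqref{eq_set_of_equations_for_eta_infty_v2} I would left-multiply the relation from the $(1,2)$ entry, $(p^2\alpha^*-I)v_{21}=\tfrac{p^2}{q}\gamma v_{23}$, by $\gamma^*$, push $\gamma^*$ past $\alpha^*$ via $\gamma^*\alpha^*=q\alpha^*\gamma^*$, substitute the $(3,2)$ relation $p^2\gamma^*v_{21}=(I-\tfrac{p^2}{q^2}\alpha)v_{23}$ for every occurrence of $\gamma^*v_{21}$, and finally apply $\alpha^*\alpha=I-\gamma^*\gamma$. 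The two $\gamma^*\gamma\,v_{23}$ terms cancel and, after scaling by $-p^2$, what remains is exactly $\bigl[(1+\tfrac{p^4}{q})I-\tfrac{p^2}{q^2}\alpha-qp^2\alpha^*\bigr]v_{23}=0$. The first line of \eqref{eq_set_of_equations_for_eta_infty_v12} comes out identically: left-multiply the $(2,3)$ relation by $\gamma^*$, use $\gamma^*\alpha=q^{-1}\alpha\gamma^*$ together with $\alpha\alpha^*=I-q^2\gamma^*\gamma$, substitute the $(2,1)$ relation for $\gamma^*v_{32}$, observe the $\gamma^*\gamma\,v_{12}$ terms cancel, and rescale by $-q$. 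This elimination, performed correctly for both systems, is the main (and essentially the only) obstacle; everything else is bookkeeping forced by the shapes of $R$ and $Q$.

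For the ``if'' direction I must know that these eliminations are reversible, so that the reduced systems are genuinely equivalent to the raw nine entries. This is where I would use that $\gamma$ (hence $\gamma^*$) is \emph{injective} on $\ell^2(\N_0)$, being diagonal with strictly positive eigenvalues $q^l$: left-multiplication by $\gamma^*$ preserves equality of vectors in both directions, and the substitution of the (already-verified) second line makes the passage between the raw $(1,2)$ entry and the derived first line an equivalence rather than a mere implication. Thus, given the second line, the first line and the raw entry it replaces are interchangeable, and the three systems are collectively equivalent to \eqref{eq_R*V=}; Lemma \ref{lem_QRV} then yields the claimed characterisation.
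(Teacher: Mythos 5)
Your proposal is correct and follows essentially the same route as the paper: expand $(R^*V)^t=\bar{R}Q^{-1}V^tQ$ entry by entry, read off \eqref{eq-set-suq2} from the four corner entries, obtain the second lines of \eqref{eq_set_of_equations_for_eta_infty_v12} and \eqref{eq_set_of_equations_for_eta_infty_v2} verbatim from the $(2,1)$ and $(3,2)$ entries, and derive the first lines by eliminating the remaining variable via the $SU_q(2)$ commutation relations (the paper performs the first elimination by multiplying the $(2,1)$ relation by $\alpha-\tfrac{q^3}{p^2}$ rather than the $(2,3)$ relation by $\gamma^*$, but this is the same computation run in the other order). Your explicit check that the eliminations are reversible -- using that $\gamma$ is injective in this realisation, so that left multiplication by $\gamma^*$ loses no information -- addresses the ``if'' direction, which the paper's one-directional derivation leaves implicit, and is a welcome extra precision.
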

\begin{proof}
According to Lemma \ref{lem_QRV} a matrix $V$ as above defines a 
$\pi$-$\e$-cocycle if and only if 
$(R^*V)^t=\bar{R}Q^{-1}V^tQ$.
In our case this means that
\begin{align*}
\bar{R}Q^{-1}V^tQ & = 
\begin{pmatrix}
\alpha^* & 0 & -q\gamma \\ 0 & I & 0 \\ \gamma^* & 0 & \alpha \end{pmatrix}
\begin{pmatrix}
1 & 0 & 0 \\ 0 & \frac1{p^2} & 0 \\ 0 & 0 & \frac1{q^2} \end{pmatrix}
\begin{pmatrix}
v_{11} & v_{12} & v_{13} \\ v_{21} & v_{22} & v_{23} \\ v_{31} & v_{32} & 
v_{33} 
\end{pmatrix}^t
\begin{pmatrix}
1 & 0 & 0 \\ 0 & p^2 & 0\\ 0 & 0 & q^2 \end{pmatrix}
\\ 
&= \begin{pmatrix}
\alpha^* & 0 & -q\gamma \\ 0 & I & 0 \\ \gamma^* & 0 & \alpha \end{pmatrix}
\begin{pmatrix}
v_{11} & p^2v_{21} & q^2v_{31} \\ 
\frac1{p^2}v_{12} & v_{22} & \frac{q^2}{p^2}v_{32} \\ 
\frac1{q^2} v_{13} &\frac{p^2}{q^2} v_{23} & v_{33} \end{pmatrix}
\\ 
&= \begin{pmatrix}
\alpha^*v_{11} -\frac1{q}\gamma v_{13} & p^2\alpha^*v_{21} -\frac{p^2}{q}\gamma 
v_{23} & q^2\alpha^*v_{31} -q\gamma v_{33} \\ 
\frac1{p^2}v_{12} & v_{22} & \frac{q^2}{p^2}v_{32} \\ 
\gamma^*v_{11} + \frac{1}{q^2}\alpha v_{13} 
& p^2\gamma^*v_{21} + \frac{p^2}{q^2}\alpha v_{23}
& q^2\gamma^*v_{31} + \alpha v_{33}
\end{pmatrix}
\end{align*}
equals
\begin{align*}
(R^*V)^t &= 
\left( \begin{pmatrix}
\alpha^* & 0 & \gamma^* \\ 0 & I & 0 \\  -q\gamma & 0 &\alpha
\end{pmatrix} 
\begin{pmatrix}
v_{11} & v_{12} & v_{13} \\ v_{21} & v_{22} & v_{23} \\ v_{31} & v_{32} & 
v_{33} 
\end{pmatrix}
\right)^t
\\= &
\begin{pmatrix}
\alpha^*v_{11}+ \gamma^*v_{31} & \alpha^*v_{12}+ \gamma^*v_{32}  & 
\alpha^*v_{13}+ \gamma^*v_{33}  \\
v_{21} & v_{22} & v_{23} \\ 
-q\gamma v_{11}+  \alpha v_{31} & -q\gamma v_{12}+  \alpha v_{32} 
& -q\gamma v_{13}+  \alpha v_{33} 
\end{pmatrix}^t
\\= &
\begin{pmatrix}
\alpha^*v_{11}+ \gamma^*v_{31} & v_{21} & -q\gamma v_{11}+  \alpha v_{31}\\ 
\alpha^*v_{12}+ \gamma^*v_{32} & v_{22} & -q\gamma v_{12}+  \alpha v_{32}\\ 
\alpha^*v_{13}+ \gamma^*v_{33} & v_{23} & -q\gamma v_{13}+  \alpha v_{33}  
\end{pmatrix}.
\end{align*}
Comparing entries $11$, $13$, $31$ and $33$ of the matrices above yields the last set of equalities in the formulation of the lemma. Consider then entries $21$ and $23$. They yield
$$  \begin{cases}
(\alpha^*-\frac1{p^2})v_{12} + \gamma^*v_{32} =0 \\ 
q\gamma v_{12}+  (\frac{q^2}{p^2}-\alpha)v_{32} =0
\end{cases}.$$

Using the fact that $(\alpha - 
\frac{q^3}{p^2})\gamma^* = q\gamma^*\alpha - \frac{q^3}{p^2}\gamma^*= 
q\gamma^*(\alpha - \frac{q^2}{p^2})$ we get
\begin{align*}
& \quad \begin{cases}
(\alpha - \frac{q^3}{p^2})(\alpha^*-\frac1{p^2})v_{12} + 
q^2\gamma^* \gamma v_{12}=0 \\
\gamma^*v_{32}=(\frac1{p^2}- \alpha^*)v_{12} 
\end{cases}.
\end{align*}
Using the commutation relations in $SU_q(2)$ we simplify the first formula
\begin{align*}
&\big[\underbrace{(\alpha \alpha^* +q^2\gamma^* 
	\gamma)}_{=1} -\frac1{p^2}\alpha - 
\frac{q^3}{p^2}\alpha^*+\frac{q^3}{p^2}\frac1{p^2} 
\big]v_{12}=0. 
\end{align*}
This gives the equalities in \eqref{eq_set_of_equations_for_eta_infty_v12}.

If we then consider entries $12$ and $32$, we obtain 

$$\begin{cases}
(I-p^2 \alpha^*)v_{21} +\frac{p^2}{q}\gamma v_{23}=0\\
p^2\gamma^*v_{21} + (\frac{p^2}{q^2}\alpha -I)v_{23} =0
\end{cases}.$$ 
Due to the relation
$\gamma^*(I-p^2 \alpha^*) = \gamma^*-qp^2 \alpha^*\gamma^* =(I-qp^2 
\alpha^*)\gamma^*$,
we obtain
\begin{align*}
&   \begin{cases}
p^2\frac{p^2}{q}\gamma^*\gamma v_{23} - (I-qp^2 \alpha^*)(\frac{p^2}{q^2}\alpha 
-I)v_{23} =0 \\
\gamma^*v_{21} = \frac{1}{p^2}(I-\frac{p^2}{q^2}\alpha)v_{23}
\end{cases}.
\end{align*}
Therefore
\begin{align*}
& \left[\frac{p^4}{q}(\gamma^*\gamma + \alpha^*\alpha) - 
\frac{p^2}{q^2}\alpha +I -qp^2 \alpha^*\right]v_{23} =0.
\end{align*}
Another use of the commutation relation, this time  $\gamma^*\gamma + \alpha^*\alpha = I$, yields the equalities in \eqref{eq_set_of_equations_for_eta_infty_v2}.
\end{proof}

We shall focus on finding non-trivial solutions to the sets of equations appearing separately in  \eqref{eq_set_of_equations_for_eta_infty_v12} and \eqref{eq_set_of_equations_for_eta_infty_v2}. It can be quickly seen that once we write each of these using the canonical orthonormal basis, they become recurrence relations which admit formal solutions, and the key question is when these solutions actually represent vectors in $\ell^2(\N_0)$. Here Lemma \ref{lem_rysiek} will be of help.
It turns out that we have to treat separately the case of $p<q^{\frac{1}{2}}$ and  of $p>q^{\frac{1}{2}}$. 

\begin{prop} \label{cocycleCase1}
Let $0<q<p<1$, $p < q^{\frac{1}{2}}$. Then the set of equations  \eqref{eq_set_of_equations_for_eta_infty_v12} admits a solution $(v_{12},v_{32})$ with both  $v_{12},v_{32}$ non-zero vectors in $\ell^2(\N_0)$. Put 
$$V=\begin{pmatrix}
0 & v_{12} & 0 \\ 0 & 0 & 0 \\ 0 & v_{32} & 0 
\end{pmatrix}$$
and define the corresponding cocycle $\eta \in Z_1(\AuQ, _\pi \ell^2(\N_0) _\e)$ via Lemma \ref{lem_QRV}
(recall that $Q={\rm diag}\ (1,p^2,q^2)$, and the representation $\pi$ is defined via the matrix \eqref{formula:Keyrep}). Let $c\in Z_2(U_Q^+)$ be the cocycle defined as the cup product of 	$\eta$ with itself, as in Lemma \ref{cupproduct}. Then 
\[ \Delta(c) \textup{ is a invertible diagonal matrix},\]
where $\Delta$ is the defect map of Definition \ref{def:defect}. 	
\end{prop}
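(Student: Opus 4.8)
The plan is to split the argument into two essentially independent parts: first producing the $\ell^2$-solution $(v_{12},v_{32})$ with both components nonzero, and then computing the defect of the resulting self-cup-product, which turns out to be a purely diagonal bookkeeping exercise. The real content lies in the first part.

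For the first part I would expand $v_{12}=\sum_{n\ge 0}a_n e_n$ in the canonical basis and rewrite the first equation of \eqref{eq_set_of_equations_for_eta_infty_v12} using $\alpha e_k=\sqrt{1-q^{2k}}e_{k-1}$ and $\alpha^* e_k=\sqrt{1-q^{2(k+1)}}e_{k+1}$. This turns the operator equation into the three-term recurrence
\[
\tfrac1{p^2}\sqrt{1-q^{2(m+1)}}\,a_{m+1}=\bigl(1+\tfrac{q^3}{p^4}\bigr)a_m-\tfrac{q^3}{p^2}\sqrt{1-q^{2m}}\,a_{m-1},\qquad m\ge 0,
\]
with the convention $a_{-1}=0$, so the solution is determined up to the scalar $a_0$. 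Since the weights $\sqrt{1-q^{2m}}\to 1$ exponentially fast, this is a Poincar\'e-type recurrence whose limiting characteristic equation is $r^2-(p^2+\tfrac{q^3}{p^2})r+q^3=0$, with roots $r_1=p^2$ and $r_2=q^3/p^2$. The point of the hypotheses is that $p<q^{1/2}$ gives $r_1=p^2<q$, while $q<p$ gives $r_2=q^3/p^2<q$; hence both roots lie below $q$, and every nonzero solution decays like $(\max(r_1,r_2))^m$ with $\max(r_1,r_2)<q<1$. I would invoke Lemma \ref{lem_rysiek} to make this asymptotic rigorous and to conclude that the forward solution indeed lies in $\ell^2(\N_0)$, producing a nonzero $v_{12}$.

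For $v_{32}$ I would read the second equation of \eqref{eq_set_of_equations_for_eta_infty_v12} as $v_{32}=\gamma^{-1}(\tfrac1{p^2}-\alpha^*)v_{12}$, where $\gamma^{-1}e_l=q^{-l}e_l$ is unbounded. Writing $w=(\tfrac1{p^2}-\alpha^*)v_{12}$, its coefficients decay at worst like $(\max(r_1,r_2))^l$, so dividing by $q^l$ leaves a sequence decaying geometrically with ratio $\max(r_1,r_2)/q<1$; thus $v_{32}\in\ell^2(\N_0)$. This is precisely where both inequalities on $p$ are simultaneously used. Finally $v_{32}\neq 0$, because $v_{32}=0$ would force $\alpha^* v_{12}=\tfrac1{p^2}v_{12}$, which is impossible: $\|\alpha^*\|\le 1<p^{-2}$, so $p^{-2}\notin\sigma(\alpha^*)$ and $(\tfrac1{p^2}-\alpha^*)$ is injective.

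With $(v_{12},v_{32})$ in hand the second part is a direct computation. Since $c$ is the cup product of $\eta$ with itself, in the defect formula \eqref{Dform} one takes $V=W$, and $V$ is supported only in positions $(1,2)$ and $(3,2)$. As $Q$ has three distinct eigenvalues the projections $P_i$ are rank one, so $\Delta(c)$ equals the diagonal part of $D(c)$; evaluating $D(c)_{kk}=\sum_p\|V_{pk}\|^2-\sum_p\tfrac{Q_k}{Q_p}\|V_{kp}\|^2$ with $Q=\mathrm{diag}(1,p^2,q^2)$ yields
\[
\Delta(c)=\mathrm{diag}\Bigl(-\tfrac1{p^2}\|v_{12}\|^2,\ \|v_{12}\|^2+\|v_{32}\|^2,\ -\tfrac{q^2}{p^2}\|v_{32}\|^2\Bigr),
\]
which is diagonal and invertible exactly because $v_{12}$ and $v_{32}$ are both nonzero. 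The main obstacle is thus the first part: establishing $\ell^2$-membership of the recurrence solution and the survival of $v_{32}$ after applying the unbounded $\gamma^{-1}$, both of which rest on the observation that $q<p<q^{1/2}$ forces both characteristic roots strictly below $q$.
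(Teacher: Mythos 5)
Your proposal is correct and follows essentially the same route as the paper: expand $v_{12}$ in the canonical basis to get the three-term recurrence, invoke Lemma \ref{lem_rysiek} with $a=p^2$, $b=q^3/p^2$ (both below $q$ exactly by $q<p<q^{1/2}$) to get square summability of $(q^{-k}b_k)$, solve for $v_{32}$ by dividing by the eigenvalues of $\gamma$, and then read off $\Delta(c)=\mathrm{diag}\bigl(-\tfrac1{p^2}\|v_{12}\|^2,\ \|v_{12}\|^2+\|v_{32}\|^2,\ -\tfrac{q^2}{p^2}\|v_{32}\|^2\bigr)$ from the cup-product formula. The only cosmetic differences are that the paper checks $v_{32}\neq 0$ by noting its $e_0$-coefficient is $\tfrac{1}{p^2}$, whereas you use the (equally valid) spectral argument $p^{-2}\notin\sigma(\alpha^*)$.
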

\begin{proof}
By Lemma \ref{lem:cocinf} to find a cocycle of the form given by the matrix $V$ described in the proposition we need to find a non-zero solution of the set of equations \eqref{eq_set_of_equations_for_eta_infty_v12}.

Suppose for the moment that we do have a solution, for which $v_{12}=\sum_{k=0}^\infty b_ke_k$, where $(b_k)_{k=0}^\infty \in \ell^2$. Then the first of the  equations in  \eqref{eq_set_of_equations_for_eta_infty_v12} would yield
\begin{align*}
0 & =
\sum_{k=0}^\infty (1+\frac{q^3}{p^4})b_ke_k - 
\sum_{k=1}^\infty \frac1{p^2} \sqrt{1-q^{2k}} b_ke_{k-1}
-\sum_{k=0}^\infty \frac{q^3}{p^2}b_k\sqrt{1-q^{2(k+1)}}e_{k+1}.
\\ & =
\left((1+\frac{q^3}{p^4})b_0 -\frac1{p^2} \sqrt{1-q^{2}} b_1\right) e_0
\\& \;\;\;\;\;\;\;+
\sum_{k=1}^\infty \left((1-\frac{q^3}{p^4})b_k - 
\frac{\sqrt{1-q^{2(k+1)}}}{p^2} b_{k+1} -\frac{q^3}{p^2}\sqrt{1-q^{2k}}b_{k-1}
\right) e_k .
\end{align*}
We thus see that the sequence $(b_k)_{k=0}^\infty$ is determined by $b_0\in \C$ and the recurrence relation
\begin{equation}\label{eq_bks1a}
b_1 = \frac{p^2(1+\frac{q^3}{p^4})}{ \sqrt{1-q^{2}} }b_0 
, \quad 
b_{k+1}
=\frac{1}{ \sqrt{1-q^{2(k+1)}}} \left[\left(p^2+\frac{q^3}{p^2}\right)b_k -
q^3\sqrt{1-q^{2k}}b_{k-1}\right].
\end{equation}
The last relation can be written in a more elegant way if we set 
$\kappa:=p^2+\frac{q^3}{p^2}$, 
$\mu:=q^3$ and $c_k:=(1-q^{2k})^{-\frac12}$, $k \in \N_0$. Then 
\eqref{eq_bks1a} takes the form
\begin{equation}\label{eq_bks2}
b_1 =\kappa_1 c_1 b_0, \quad 
b_{k+1} = \kappa c_{k+1}b_k -\mu\frac{c_{k+1}}{c_k}b_{k-1}, \;\; k \in \N.
\end{equation}
Let us then return to a formal argument: set $b_0=1$ and define $b_k$ for $k \in \N$ inductively by \eqref{eq_bks2}.
We are then precisely in the situation of  Lemma \ref{lem_rysiek}, with $a=p^2 $ and $b=\frac{q^3}{p^2}$ so that $a<q$, and the conclusion of Lemma \ref{lem_rysiek} applies, so that $(q^{-k} b_k)_{k=0}^\infty \in \ell^2$. Recall that we have $1/c_k \leq 1$ for all $k \in \N_0$; then the last conclusion means that we can indeed consider the following (non-zero!) vectors in $\ell^2(\N_0)$: $v_{12}:=\sum_{k=0}^\infty b_ke_k$ and $v_{32}:=\frac{1}{p^2} e_0+
\sum_{k=1}^\infty \left[ \frac{1}{p^2}q^{-k} b_k -
\frac{q^{-k}b_{k-1}}{c_{k}}\right] e_k$.

The fact that $v_{12}$ satisfies the first of the equations in \eqref{eq_set_of_equations_for_eta_infty_v12} follows essentially from the computations displayed above. As regards the second one, it suffices to note that our choice of concrete realisation makes $\gamma$ self-adjoint, so that 
\[ \gamma^* v_{32} = \frac{1}{p^2} e_0+
\sum_{k=1}^\infty \left[ \frac{1}{p^2} b_k -
\frac{b_{k-1}}{c_{k}}\right] e_k = (\frac1{p^2}- \alpha^*)v_{12}.\]

Consider then the matrix $\Delta(c)\in M_3(\C)$, where $c\in Z^2(U_Q^+)$ is the cup product of $1$-cocycles associated with the matrix $V$. It is diagonal (recall that our $Q$ has one-dimensional eigenspaces), and the formula \eqref{Dform}  implies that 
\begin{equation}  \label{Deltacform} \Delta(c)_{kk} = \sum_{p=1}^3 \left(\langle v_{pk}, v_{pk}\rangle - \frac{Q_k}{Q_p}\langle v_{kp}, v_{kp}\rangle \right), \;\;\; k=1,\ldots,3.  \end{equation}
Thus finally, taking into account the form of the matrix $V$, we obtain
\[ \Delta(c)= \textup{diag}\left[-\frac{1}{p^2} \|v_{12}\|^2, \|v_{12}\|^2 + \|v_{32}\|^2, - \frac{q^2}{p^2} \|v_{32}\|^2   \right].\]
\end{proof} 
 
The next result is completely analogous, but deals with the case of $p \in( q^{\frac{1}{2}},q )$. 
 
\begin{prop} \label{cocycleCase2}
Let $0<q<p<1$, $p > q^{\frac{1}{2}}$. Then the set of equations  \eqref{eq_set_of_equations_for_eta_infty_v2} admits a solution $(v_{21},v_{23})$ with both  $v_{12},v_{32}$ non-zero vectors in $\ell^2(\N_0)$. Put 
$$V=\begin{pmatrix}
0 & 0 & 0 \\ v_{21} & 0 & v_{23} \\ 0 & 0 & 0 
\end{pmatrix}$$
and define the corresponding cocycle $\eta \in Z_1(\AuQ, _\pi \ell^2(\N_0) _\e)$ via Lemma \ref{lem_QRV}
(recall that $Q={\rm diag}\ [1,p^2,q^2]$, and the representation $\pi$ is defined via the matrix \eqref{formula:Keyrep}). Let $c\in Z_2(\AuQ)$ be the cocycle defined as the cup product of 	$\eta$ with itself, as in Lemma \ref{cupproduct}. Then 
\[\Delta(c) \textup{ is an invertible diagonal matrix},\]
where $\Delta$ is the defect map of Definition \ref{def:defect}. 	
\end{prop}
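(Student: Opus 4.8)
The proof will run completely in parallel with that of Proposition~\ref{cocycleCase1}, with the roles of the two coordinate pairs interchanged. By Lemma~\ref{lem:cocinf}, exhibiting a cocycle of the stated form amounts to finding a non-zero solution $(v_{21},v_{23})$ of the system \eqref{eq_set_of_equations_for_eta_infty_v2}; this time I would start from the first equation and solve for $v_{23}$. Writing $v_{23}=\sum_{k=0}^\infty b_k e_k$ and expanding $\alpha v_{23}$ and $\alpha^* v_{23}$ in the canonical basis, the equation $[(1+\tfrac{p^4}{q})I-\tfrac{p^2}{q^2}\alpha-qp^2\alpha^*]v_{23}=0$ becomes, upon comparing the coefficients of each $e_k$, a two-term recurrence for $(b_k)_{k=0}^\infty$ that is determined by $b_0$.

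The decisive algebraic point is that, after setting $c_k:=(1-q^{2k})^{-1/2}$, this recurrence takes exactly the shape \eqref{eq_bks2}, namely $b_1=\kappa c_1 b_0$ and $b_{k+1}=\kappa c_{k+1}b_k-\mu\tfrac{c_{k+1}}{c_k}b_{k-1}$, now with $\kappa=\tfrac{q^2}{p^2}+qp^2$ and $\mu=q^3$. I would then factor $\kappa=a+b$ and $\mu=ab$ via $a=\tfrac{q^2}{p^2}$ and $b=qp^2$ (indeed $a+b=\kappa$ and $ab=q^3=\mu$), which places us precisely in the setting of Lemma~\ref{lem_rysiek}. Its hypothesis $a<q$ now reads $\tfrac{q^2}{p^2}<q$, i.e.\ $p^2>q$, which is exactly the standing assumption $p>q^{1/2}$ of the proposition; this is also what separates the present case from Case~1, where the small root was instead $a=p^2$. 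Hence Lemma~\ref{lem_rysiek} gives $(q^{-k}b_k)_{k=0}^\infty\in\ell^2$, so that $v_{23}=\sum_k b_k e_k$ is a non-zero element of $\ell^2(\N_0)$.

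To recover $v_{21}$ I would use the second equation of \eqref{eq_set_of_equations_for_eta_infty_v2}. Since the concrete realisation makes $\gamma$ self-adjoint and injective, I set $v_{21}:=\gamma^{-1}\big[\tfrac{1}{p^2}(I-\tfrac{p^2}{q^2}\alpha)v_{23}\big]$, whose $e_k$-coefficient is $q^{-k}\big[\tfrac{1}{p^2}b_k-\tfrac{1}{q^2}\sqrt{1-q^{2(k+1)}}\,b_{k+1}\big]$. That $v_{21}\in\ell^2(\N_0)$ follows from $(q^{-k}b_k)_{k}\in\ell^2$ together with $\sqrt{1-q^{2(k+1)}}\le 1$, while a one-line computation of its $e_0$-coefficient (which equals $-p^2/q$) shows $v_{21}\neq 0$. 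Both equations of \eqref{eq_set_of_equations_for_eta_infty_v2} then hold, the first by the recurrence and the second by construction, so Lemma~\ref{lem_QRV} yields the cocycle $\eta$ and Lemma~\ref{cupproduct} its self-cup-product $c$.

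Finally, $\Delta(c)$ is diagonal because $Q$ has one-dimensional eigenspaces, and substituting the matrix $V$ (whose only non-zero entries are $v_{21}$ and $v_{23}$) into \eqref{Deltacform} gives
\[
\Delta(c)=\textup{diag}\left[\,\|v_{21}\|^2,\ -p^2\|v_{21}\|^2-\tfrac{p^2}{q^2}\|v_{23}\|^2,\ \|v_{23}\|^2\,\right].
\]
As $v_{21}$ and $v_{23}$ are both non-zero, the two outer entries are strictly positive and the middle one strictly negative, so $\Delta(c)$ is an invertible diagonal matrix. Since the argument is by design analogous to Case~1, I expect no new conceptual obstacle: the only genuine work is establishing $\ell^2$-summability of the formal solution, and the one point that truly requires care is choosing the factorisation $(a,b)=(q^2/p^2,\,qp^2)$ correctly, as it is precisely this choice that makes the threshold $p=q^{1/2}$ — and hence the split between the two propositions — appear.
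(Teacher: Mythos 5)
Your proof is correct and follows essentially the same route as the paper's: the same recurrence \eqref{eq_bks2} with $\kappa=\tfrac{q^2}{p^2}+qp^2$, $\mu=q^3$, the same application of Lemma \ref{lem_rysiek} with $\{a,b\}=\{q^2/p^2,\,qp^2\}$ (the lemma is symmetric in $a$ and $b$, so your relabelling versus the paper's is immaterial, and the second inequality $qp^2<q$ holds trivially since $p<1$), the same reconstruction of $v_{21}$ from the second equation of \eqref{eq_set_of_equations_for_eta_infty_v2} via self-adjointness of $\gamma$, and the same resulting formula for $\Delta(c)$. Your explicit computation that the $e_0$-coefficient of $v_{21}$ equals $-p^2/q$ is a small welcome addition, since the paper only asserts non-vanishing of that vector.
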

\begin{proof}
As the proof follows the identical steps as that of the last proposition, we just present key differences.	

This time we first suppose that we have a solution with $v_{23}=\sum_{k=0}^\infty b_ke_k$, where $(b_k)_{k=0}^\infty \in \ell^2$. Then the first of the  equations in  \eqref{eq_set_of_equations_for_eta_infty_v12} would yield
\begin{align*}
0 & = 
\sum_{k=0}^\infty (1+\frac{p^4}{q})b_ke_k - 
\sum_{k=1}^\infty \frac{p^2}{q^2} \sqrt{1-q^{2k}} b_ke_{k-1}
-\sum_{k=0}^\infty  qp^2b_k\sqrt{1-q^{2(k+1)}}e_{k+1}.
\\ & =
\left((1+\frac{p^4}{q})b_0  -\frac{p^2}{q^2} \sqrt{1-q^{2}} b_1\right) e_0
\\& \;\;\;\;+
\sum_{k=1}^\infty \left((1+\frac{p^4}{q})b_k - \frac{p^2}{q^2} 
\sqrt{1-q^{2(k+1)}} b_{k+1}
-qp^2\sqrt{1-q^{2k}}b_{k-1}
\right) e_k .
\end{align*}
and we are led to the recurrence formula (with $k \in \N$)
\begin{equation}\label{eq_bks1b}
b_1 = \frac{q^2}{p^2}\frac{(1+\frac{p^4}{q})}{ \sqrt{1-q^{2}} }b_0 
, \quad 
b_{k+1}
=\frac{q^2}{p^2 \sqrt{1-q^{2(k+1)}}}[(1+\frac{p^4}{q})b_k - 
qp^2\sqrt{1-q^{2k}}b_{k-1}].
\end{equation}
Set then 
$\kappa:=\frac{q^2}{p^2}(1+\frac{p^4}{q})=\frac{q^2}{p^2}+qp^2$, 
$\mu:=q^3$ and $c_k$ ($k \in \N$) as before. Then 
\eqref{eq_bks1b} takes the form identical to \eqref{eq_bks2}.

Set $b_0=1$ and define $b_k$ for $k \in \N$ inductively by \eqref{eq_bks2}.
Lemma \ref{lem_rysiek}, with $a=qp^2 $ and $b=\frac{q^2}{p^2}$ (so that $b<q$), shows then that $(q^{-k} b_k)_{k=0}^\infty \in \ell^2$. Thus we can indeed consider the following (non-zero!) vectors in $\ell^2(\N_0)$: $v_{23}:=\sum_{k=0}^\infty b_ke_k$ and $v_{21}:=\sum_{k=0}^\infty \left[ \frac{1}{p^2}q^{-k} b_k -
\frac1{q}\frac{q^{-(k+1)}b_{k+1}}{c_{k+1}}\right] e_k$. Then we verify that these indeed satisfy \eqref{eq_set_of_equations_for_eta_infty_v12}. 

If now $c\in Z^2(U_Q^+)$ is the cup product of $1$-cocycles associated with the matrix $V$ as in the formulation of the theorem, formula \eqref{Deltacform} remains valid and we finally obtain
\[ \Delta(c)= \textup{diag}\left[\|v_{21}\|^2, - p^2\|v_{21}\|^2 - \frac{p^2}{q^2}\|v_{23}\|^2,  \|v_{23}\|^2   \right].\]	
\end{proof}

\begin{remark}
One might ask whether the cocycles corresponding to a matrix $V\in M_3(\ell^2(\mathbb{N}_0))$ of the form
\[
V=\left(
\begin{array}{ccc}
 v_{11} & 0 & v_{13} \\
 0 & 0 & 0 \\
 v_{31} & 0 & v_{33}
\end{array}
\right)
\]
could also be used to produce non-trivial 2-cocycles. For that the vectors $v_{11},v_{13},v_{31},v_{33}$ would have to satisfy the equations in \eqref{eq-set-suq2}, and the first of these implies that the coefficients of $v_{13}=\sum_{k=0}^\infty a_ke_k$ and  $v_{31}=\sum_{k=0}^\infty b_ke_k$ satisfy the equalities
\[
-q^{-1}a_k = b_k, \qquad k\in\mathbb{N}_0.
\]
Therefore we would have $\|v_{13}\|^2=q^2\|v_{31}\|^2$. This implies that
\begin{eqnarray*}
D_{11} &=& A_{11} - B_{11} = \|v_{31}\|^2 - \frac{1}{q^2} \|v_{13}\|^2 = 0, \\
D_{33} &=& A_{33} - B_{33} = \|v_{13}\|^2 - q^2 \|v_{31}\|^2 =0,
\end{eqnarray*}
so that we would obtain
\[
\Delta(c) = 0
\]
for the associated 2-cocycle $c$. In other words, the 2-cocycle produced from such a 1-cocycle using the cup product is always a coboundary.
\end{remark}

We are ready for the main result of this subsection; we shall use the above constructions in dimension $3$ to deal with the general case of a matrix $Q$ of arbitrary size with three distinct eigenvalues which do not form a geometric progression.

\begin{thm}\label{thm:coh3}
Suppose that $d \in \N$ and that $Q \in M_d(\C)$ is a strictly positive matrix with precisely three distinct eigenvalues. If these eigenvalues do not form a geometric progression, then
\[ H^2(U_Q^+) \simeq sl_Q(d).	\]
\end{thm}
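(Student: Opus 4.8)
By Corollary \ref{cor:H2} it suffices to show that $\Delta(Z^2(U_Q^+)) = sl_Q(d)$. Theorem \ref{thm:image} already provides the inclusions
\[ sl(d_1)\oplus sl(d_2) \oplus sl(d_3) \subset \Delta(Z^2(U_Q^+)) \subset sl_Q(d), \]
so the plan is to produce enough additional cocycles to fill the gap. As noted after Theorem \ref{thm:image}, for $n=3$ blocks the codimension of $sl(d_1)\oplus sl(d_2)\oplus sl(d_3)$ inside $sl_Q(d)$ is exactly $n-2 = 1$. Thus I only need to exhibit \emph{one} cocycle $c \in Z^2(U_Q^+)$ whose defect $\Delta(c)$ lies outside $sl(d_1)\oplus sl(d_2)\oplus sl(d_3)$, i.e.\ whose diagonal partial traces are not all zero; combined with the two already-established inclusions and a dimension count this will force equality.

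\textbf{Reduction to dimension $3$.} The first step is to reduce the general case to the three-by-three case already handled in Propositions \ref{cocycleCase1} and \ref{cocycleCase2}. Writing the eigenvalues of $Q$ as $q_1 > q_2 > q_3$ with eigenspaces of dimensions $(d_1,d_2,d_3)$, I pick one coordinate from each eigenspace, giving a distinguished $3\times 3$ principal corner on which $Q$ restricts to $\textup{diag}(q_1,q_2,q_3)$, a positive matrix with three distinct eigenvalues that (after rescaling by a positive scalar, using the normalisation remark at the start of Section 4) may be taken in the form $\textup{diag}(1,p^2,q^2)$ with $0<q<p<1$. The hypothesis that $q_1,q_2,q_3$ do not form a geometric progression translates precisely into $p^2 \neq q$, i.e.\ $p \neq q^{1/2}$, so that one of the two cases $p<q^{1/2}$ or $p>q^{1/2}$ applies. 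The embedding $U_3^+ \hookrightarrow U_Q^+$ coming from this corner (the analogue of the maps $\pi_i$ discussed in Section \ref{sec:prelim}) is a surjective $^*$-homomorphism compatible with the counits, and hence pulls a cocycle on the three-by-three unitary quantum group back to one on $A_u(Q)$. I therefore apply Proposition \ref{cocycleCase1} or \ref{cocycleCase2} to the corner to obtain a two-cocycle $c_0$ whose defect is an \emph{invertible} diagonal $3\times 3$ matrix, and then pull it back to a cocycle $c$ on $U_Q^+$.

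\textbf{Computing the defect of the pulled-back cocycle.} The crucial verification is that the defect $\Delta(c)$ of the pulled-back cocycle genuinely fails to lie in $sl(d_1)\oplus sl(d_2)\oplus sl(d_3)$. Because the generators outside the chosen corner are sent by the embedding to $0$ or $1$, the cocycle $c$ vanishes on all tensor products involving such generators, and inspecting the defining sums \eqref{eq_ajk}--\eqref{eq_bjk} shows that $D(c)_{jk}$ is supported on the chosen three coordinates and agrees there with the $3\times 3$ defect computed in the relevant proposition. Consequently the three nonzero diagonal entries of $\Delta(c)$ sit one in each block of $Q$, and since the corresponding $3\times3$ defect is invertible these entries are all nonzero; in particular at least one block of $Q$ receives a nonzero partial trace, so $\Delta(c)\notin sl(d_1)\oplus sl(d_2)\oplus sl(d_3)$. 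I expect this bookkeeping---confirming that the partial traces picked up by $\Delta(c)$ are exactly the three nonzero entries delivered by Proposition \ref{cocycleCase1}/\ref{cocycleCase2}, and in particular that they are not all zero---to be the main obstacle, in the sense that it is where one must be careful rather than merely formal. Finally, $\Delta$ is linear, its image is a subspace containing both $sl(d_1)\oplus sl(d_2)\oplus sl(d_3)$ and the extra vector $\Delta(c)$, and it is contained in $sl_Q(d)$, which is a space of dimension exactly one more than $sl(d_1)\oplus sl(d_2)\oplus sl(d_3)$. Hence $\Delta(Z^2(U_Q^+)) = sl_Q(d)$, and Corollary \ref{cor:H2} gives $H^2(U_Q^+)\simeq sl_Q(d)$, as claimed.
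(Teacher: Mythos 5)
Your proposal is correct and follows essentially the same route as the paper's own proof: reduce to the $3\times 3$ corner $\tilde{Q}=\textup{diag}(1,p^2,q^2)$ via the surjection $A_u(Q)\to A_u(\tilde{Q})$, pull back the cocycle of Proposition \ref{cocycleCase1} or \ref{cocycleCase2} (the non-geometric-progression hypothesis giving $p\neq q^{1/2}$), observe that its defect has one nonzero diagonal entry in each block, and finish by the codimension-one dimension count. The only quibble is notational: the corner quantum group should be written $U^+_{\tilde{Q}}$ rather than $U_3^+$, which in the paper's conventions denotes the free unitary group $U^+_{I_3}$.
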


\begin{proof}
Consider the list of eigenvalues of $Q$. By rescaling we can assume that it is of the form 	
$(1,p^2,q^2)$, with $0<q<p<1$; we also assume that $Q$ is diagonal, and the dimensions of respective eigenspaces equal $d_1$, $d_2$ and $d_3$. 

Let then $\tilde{Q}\in M_3(\C)$, $\tilde{Q}= {\rm diag}\ [1,p^2,q^2]$. It is easy to see that similarly as in Section \ref{sec:prelim} we have the inclusion $U^+_{\tilde{Q}} \subset U_Q^+$, realised via the surjective $^*$-homomorphism $q: \AuQ \to A_u(\tilde{Q})$, mapping generators of $\AuQ$ either to generators of $A_u(\tilde{Q})$ or to $0$ or to $1$  (the choice here depends on  fixing a basis vector in each of the three eigenspaces of $Q$, thus choosing three basis vectors in $\C^d$).
Let then $c$ denote a cocycle on $A_u(\tilde{Q})$ constructed via Proposition \ref{cocycleCase1} or Proposition \ref{cocycleCase2}, depending on whether $p>q^\frac{1}{2}$ or  $p<q^\frac{1}{2}$.  Put
$c'= c \circ (q \ot q): \AuQ \ot \AuQ\to \C$, so that $c' \in Z^2(U_Q^+)$. It is then easy to check, based on conclusions of Propositions \ref{cocycleCase1} or \ref{cocycleCase2} (and remembering that we are dealing with normalised cocycles) that 
$\Delta(c')$ is a diagonal matrix with one non-zero entry in each eigenspace of $Q$. Thus in particular $\Delta(c') \notin sl(d_1)\oplus sl(d_2) \oplus sl(d_3)$. 

Now Theorem \ref{thm:image}, Corollary \ref{cor:H2} and the dimension count yield the desired result.
\end{proof}

\subsection*{Higher `block' dimensions and the main result}
It turns out that the techniques of the last subsection allow us to deduce a complete result for matrices $Q$ with more than three distinct eigenvalues. We begin with a basic combinatorial lemma. Note that the number $n-2$ is optimal, if we want conditions (i)-(iii) below to hold.

\begin{lemma}\label{triples}
Let $n \in \N$, $n \geq 4$, and suppose that we are given a tuple of distinct positive numbers $(q_1, \ldots, q_n)$. Then one can choose $n-2$ three-element subsets of $\{1,\ldots,n\}$ such that the elements $q_{n_1}, q_{n_2}, q_{n_3}$ corresponding to a given subset do not form (in any order) a geometric progression. Moreover this can be done in such a way that 
\begin{itemize}
\item[(i)]
there exists an element in $\{1,\ldots,n\}$, say $j_n$, which belongs to precisely one of the chosen triples;
\item[(ii)] after we delete from our collection of triples the one corresponding to $j_n$, there exists an element in $\{1,\ldots,n\}\setminus \{j_n\}$ which belongs to precisely one of the remaining chosen triples;
\item[(iii)] this behaviour continues until we are left with just $2$ triples (containing elements from a $4$-element subset $A$ of  $\{1,\ldots,n\}$), and also then  there exist two  elements in $A$ each of which belongs to precisely one of the two remaining chosen triples.
\end{itemize}	
\end{lemma}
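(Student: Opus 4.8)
The plan is to proceed by induction on $n$, combined with an explicit construction of the required triples. Before attacking the inductive structure, I would first record the elementary observation that underlies everything: given distinct positive reals $q_a, q_b, q_c$, the condition that they do \emph{not} form a geometric progression in any order is equivalent to excluding finitely many algebraic relations (namely $q_i^2 = q_j q_k$ for the three choices of middle term). In particular, for a fixed pair $\{a,b\}$ and a variable third index $c$, there are at most finitely many values of $q_c$ that would create a geometric progression with $q_a, q_b$; so among many candidate third elements, all but finitely many produce a \emph{good} (non-geometric) triple. This is the flexibility I will exploit throughout.

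The core idea is to build the $n-2$ triples greedily so that they can be \emph{peeled off} one at a time, which is exactly what conditions (i)--(iii) demand. First I would single out one index, to be called $j_n$, that will sit in only one triple; I form a good triple containing $j_n$ together with two other indices, using the finiteness observation to guarantee non-geometricity. Having committed $j_n$ to a single triple, I then need to cover the remaining $n-1$ indices $\{1,\ldots,n\}\setminus\{j_n\}$ with $n-3$ triples enjoying the same peeling property. The natural move is to recognise this as the same problem one dimension lower: I would set up the induction so that the collection of triples on the remaining $n-1$ indices is produced by the inductive hypothesis applied to the $(n-1)$-tuple, with the base case being $n=4$, where exactly $2$ triples drawn from a $4$-element set must be chosen so that two distinct elements are each uniquely covered. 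I would verify the $n=4$ base case by hand: from four distinct positive numbers one can always pick two good triples, say $\{1,2,3\}$ and $\{1,2,4\}$ (checking non-geometricity of each, adjusting the overlap pair if a coincidence occurs), so that $3$ and $4$ are each in exactly one triple, giving the two required uniquely-covered elements.

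The one genuine subtlety — and the step I expect to be the main obstacle — is ensuring that the greedy choice of $j_n$ and its triple does not obstruct the inductive step: I must choose $j_n$ and the two partners so that (a) the resulting triple is good, and (b) the remaining $n-1$ indices still admit a valid peelable family. Point (b) is where care is needed, because the inductive hypothesis applies to \emph{any} $(n-1)$-tuple of distinct positive reals, and the subtuple obtained by deleting $j_n$ is again such a tuple; so as long as I can realise condition (i) with \emph{some} admissible choice, the induction carries the rest. The finiteness observation guarantees that admissible choices for the partners of $j_n$ always exist since $n \geq 4$ gives enough indices to avoid the finitely many forbidden configurations. I would also explicitly check the counting: one triple for $j_n$ plus the inductively produced $n-3$ triples gives $n-2$ triples total, matching the claim, and the nested peeling in (ii)--(iii) is precisely the unwinding of the induction, with the uniquely-covered element at each stage being the index singled out at that level of the recursion.

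Finally, I would remark on optimality — the note preceding the lemma asserts $n-2$ is optimal — by observing that fewer than $n-2$ triples cannot simultaneously cover all $n$ indices while admitting the peeling structure; this sanity check confirms the construction is tight and that the greedy/inductive scheme is producing the minimal valid family. The verification that each chosen triple is genuinely non-geometric reduces, at every step, to the same finite-exclusion argument, so no new difficulty arises beyond the bookkeeping of which index is peeled at which stage.
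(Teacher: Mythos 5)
Your overall architecture is the same as the paper's: an induction whose base case is $n=4$, with a peeling step that commits one index $j_n$ to a single good triple and recurses on the remaining $n-1$ indices (the paper runs the induction upward, adjoining a triple through the new element $q_{n+1}$ to a family built on the first $n$ indices, but that is exactly your recursion read in reverse, and the bookkeeping of (i)--(iii) and of the count $1+(n-3)=n-2$ is handled identically). The genuine gap lies in the only place where real work is required: the existence of the good triples. Your ``finiteness observation'' (for a \emph{fixed} pair $\{a,b\}$, at most three values of the third element are forbidden) provably cannot close either of the two existence claims you rest on it. In the base case $n=4$, a fixed pair leaves only \emph{two} candidate thirds against up to three forbidden values, and both candidates can indeed be forbidden: for the values $1,q^2,q,q^4$ the pair $\{1,q^2\}$ forms a geometric triple both with $q$ and with $q^4$. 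So ``adjusting the overlap pair if a coincidence occurs'' is not a harmless remark -- whether an adjustment always succeeds \emph{is} the content of the base case, and it needs the paper's case analysis: order the values; if $(q_1,q_2,q_3)$ is geometric then $(q_1,q_2,q_4)$ is not (since $q_2^2=q_1q_3\neq q_1q_4$), and $(q_1,q_3,q_4)$, $(q_2,q_3,q_4)$ cannot both be geometric (else $q_3^2=q_1q_4=q_2q_4$ forces $q_1=q_2$).

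In the inductive step the mismatch is worse, because there you fix the \emph{third} element $q_{j_n}$ and vary the \emph{pair} of partners, so the forbidden configurations are relations among pairs, not finitely many values of a single variable: with $q_{j_n}=1$, every pair of the form $\{t,t^{-1}\}$ or $\{t,t^2\}$ among the remaining values is forbidden, and there can be as many such pairs as there are candidates. For instance, at the first inductive step $n=5$, take $q_{j_5}=1$ and remaining values $q,q^2,q^{-1},q^{1/2}$: then \emph{all three} pairs containing $q$ are forbidden, so fixing one partner and counting forbidden values for the other fails outright. The claim you need -- for any fixed $j_n$ among at least four distinct values, some pair of partners yields a non-geometric triple -- is true, but it requires an argument; the cheapest one is the paper's: apply the base case to the four values $q_1,q_2,q_3,q_{n+1}$, note that it produces two good triples among the four triples of this set, and only one of those four triples omits $q_{n+1}$, so a good triple through $q_{n+1}$ exists. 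Since both the base case $n=4$ and the step to $n=5$ fall outside the range where your counting argument works (it is only valid once $n\geq 6$), the proof as written does not go through; the fix is precisely the case analysis your outline defers to ``checking'' and ``adjusting''. (Incidentally, the step you single out as the main obstacle -- that deleting $j_n$ leaves a legitimate instance for the inductive hypothesis -- is immediate, since any $n-1$ distinct positive numbers qualify.)
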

\begin{proof}
Begin with $n=4$. We can assume that the sequence	$(q_1, \ldots, q_4)$ is increasing. We need to find two `bad' triples. Suppose say that $(q_1,q_2, q_3)$ form a geometric progression. Then this is not the case for $(q_1,q_2,q_4)$ and either for $(q_1, q_3, q_4)$ or for $(q_2, q_3, q_4)$. By symmetry we can also find two `bad' triples if $(q_2,q_3, q_4)$  form a geometric progression. And if neither $(q_1,q_2, q_3)$ nor $(q_2,q_3, q_4)$ form a geometric progression, we have already located two `bad' triples. It is easy to see that in each case there are two elements in $\{1,2,3,4\}$ each of which  belongs to precisely one of the triples.
	
The general statement follows inductively; indeed, suppose that the result has been proved for a given $n\geq 4$. Consider then the tuple $(q_1, \ldots, q_{n+1})$, which without loss of generality can be ordered so that the numbers are increasing. We know that we can find  $n-2$ relevant triples of elements in $\{1,\ldots,n\}$. Then it is enough to  note that there must exist a pair $(k_1,k_2)$ of distinct elements in $\{1,\ldots,4\}$ such that the  elements $q_{k_1}, q_{k_2}, q_{n+1}$ do not form a geometric progression. This can be seen similarly as in the first part. Obviously $n+1$ belongs to only one of the triples in the new set; and once we remove it (together with the corresponding triple) from the list, we are back in the  case already assumed to satisfy conditions (i)-(iii), so that the conditions (i)-(iii) hold also for the new choice.
\end{proof}

Recall the formula \eqref{slQd} defining  $sl_Q(d)$. We are ready to state and prove the main result of the paper, stated already in the introduction. It will essentially follow from the arguments in the proof of Theorem \ref{thm:coh3} and the last lemma.

\begin{thm} \label{theoremmain}
Let $d \in \N$ and let $Q\in M_d(\C)$ be a strictly positive matrix, whose eigenvalue list is not of the form $(p, pq,pq^2)$ with $p>0, q\in(0, \infty) \setminus\{1\}$. 
Then \[ H^2(U_Q^+) \simeq sl_Q(d).\]			
\end{thm}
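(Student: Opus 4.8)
The plan is to organise the argument according to the number $n$ of distinct eigenvalues of $Q$, since by Corollary \ref{cor:H2} the whole problem reduces to identifying the image $\Delta(Z^2(U_Q^+))$ inside $sl_Q(d)$, and Theorem \ref{thm:image} already pins this image between $sl(d_1)\oplus\cdots\oplus sl(d_n)$ and $sl_Q(d)$. Observe first that the excluded eigenvalue pattern $(p,pq,pq^2)$ is a three–term condition, so the hypothesis is automatic unless $n=3$. For $n=1$ the two bounds of Theorem \ref{thm:image} coincide and we recover the free case of \cite{DFKS18}; for $n=2$ they coincide by the dimension count and we conclude by Corollary \ref{cor:coh2}; for $n=3$ with eigenvalues not in geometric progression the statement is exactly Theorem \ref{thm:coh3}, while the geometric case is precisely the one ruled out. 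Thus the only genuinely new situation is $n\geq 4$, and I would devote the bulk of the proof to it.

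For $n\geq 4$ I would first record the relevant linear algebra. Writing a $Q$-matrix $A$ in blocks $A_1,\ldots,A_n$, the block–trace map $A\mapsto(\Tr A_1,\ldots,\Tr A_n)$ identifies the quotient $sl_Q(d)/\bigl(sl(d_1)\oplus\cdots\oplus sl(d_n)\bigr)$ with the space $\{(t_1,\ldots,t_n)\in\C^n : \sum_i q_it_i=0,\ \sum_i q_i^{-1}t_i=0\}$; since the vectors $(q_i)_i$ and $(q_i^{-1})_i$ are linearly independent for distinct positive $q_i$ with $n\geq 2$, this quotient has dimension exactly $n-2$, matching the codimension count recorded after Theorem \ref{thm:image}. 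By that theorem the image of $\Delta$ already contains the kernel $sl(d_1)\oplus\cdots\oplus sl(d_n)$ of the quotient map, so it suffices to produce $n-2$ cocycles whose defects project to a \emph{basis} of this $(n-2)$-dimensional quotient.

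To manufacture these cocycles I would invoke Lemma \ref{triples} to select $n-2$ three–element subsets of $\{1,\ldots,n\}$, none of whose eigenvalue triples forms a geometric progression and which admit the peeling order of (i)--(iii). For each such triple I would copy the construction in the proof of Theorem \ref{thm:coh3}: rescale and reorder the three eigenvalues to the normal form ${\rm diag}\,(1,p^2,q^2)$ with $0<q<p<1$ (the non–geometric condition becoming precisely $p\neq q^{1/2}$), apply Proposition \ref{cocycleCase1} or \ref{cocycleCase2} according to whether $p<q^{1/2}$ or $p>q^{1/2}$, and pull the resulting $2$-cocycle back along the surjection $\AuQ\to A_u(\tilde Q)$ that embeds the three chosen eigenspaces. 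Those propositions yield an invertible diagonal defect, so after pull–back $\Delta$ gives a $Q$-matrix supported exactly on the three eigenspaces of the triple, with all three block–traces nonzero; it lies automatically in $sl_Q(d)$ by Theorem \ref{thm:image}, hence defines a vector of the quotient supported on the three coordinates of that triple.

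The final step is linear independence of the $n-2$ resulting quotient vectors, and this is exactly where the peeling property of Lemma \ref{triples} is essential rather than cosmetic. Reading off the pivot coordinates $j_n,j_{n-1},\ldots$ guaranteed by (i)--(iii), each vector is nonzero at a coordinate where every later vector vanishes; feeding a vanishing linear combination through these pivots in order forces all coefficients to be zero, exactly as for a triangular system. Hence the $n-2$ defects span the quotient, the image of $\Delta$ equals $sl_Q(d)$, and Corollary \ref{cor:H2} delivers $H^2(U_Q^+)\simeq sl_Q(d)$. I expect the main obstacle to lie not in this assembly, which is essentially bookkeeping, but upstream: Proposition \ref{prop:nogo} shows that finite–dimensional (equivalently $Q$-) representations can never produce defects outside $sl(d_1)\oplus\cdots\oplus sl(d_n)$, so the missing trace patterns must come from genuinely infinite–dimensional representations, and the real work resides in the $\ell^2$–convergence analysis of the $SU_q(2)$–type recurrences behind Propositions \ref{cocycleCase1} and \ref{cocycleCase2}, together with the combinatorial guarantee of enough non–geometric triples with the right incidence structure.
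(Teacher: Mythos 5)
Your proposal is correct and follows essentially the same route as the paper: reduction to $n\geq 4$ via the earlier results, Lemma \ref{triples} to select $n-2$ non-geometric triples, pull-backs of the cocycles from Propositions \ref{cocycleCase1} and \ref{cocycleCase2} along the inclusions $U^+_{Q_{\mathbf{j}}}\subset U_Q^+$, and the peeling argument for linear independence of the resulting defects. Your phrasing via the $(n-2)$-dimensional quotient $sl_Q(d)/\bigl(sl(d_1)\oplus\cdots\oplus sl(d_n)\bigr)$ is just a repackaging of the paper's dimension count and augmented-basis argument, not a different method.
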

\begin{proof}
Suppose that the eigenvalue list of $Q$ is as follows: $(q_1, \ldots, q_n)$. When $n=1$ the result is \cite[Theorem 4.6]{DFKS18}; for $n=2$ it is Corollary \ref{cor:coh2}, and for $n=3$ (when the only exceptional case features) it is Theorem \ref{thm:coh3}.

Assume then that $n\geq4$.  We can suppose that $Q$ is diagonal, so that we have split $\{1, \ldots,d\}$ into $n$ parts corresponding to consecutive eigenspaces. For each of these parts denote by $\textup{Tr}_k$ the corresponding partial trace, as in the proof of Proposition \ref{prop:nogo}: formally
\[\textup{Tr}_k (A) = \sum_{r\in \{1,\ldots, d\,: \,Q_r = q_k\}} A_{rr}, \;\;\;\;\;\; A \in M_d(\C), \;k=1, \ldots,n.  \] 
Choose for $(Q_1, \ldots, Q_n)$ a collection  $\mathcal{J}$ of triples of elements of $\{1,\ldots,n\}$ as in Lemma \ref{triples}.
For each $\mathbf{j} \in \Jnd$, say $\mathbf{j}=(j_1, j_2, j_3)$ define the matrix $Q_\mathbf{j} = \textup{diag}[q_{j_1}, q_{j_2}, q_{j_3} ]$, and as in the proof of Theorem  \ref{thm:coh3} first use the natural inclusion $U_{Q_\mathbf{j}}^+ \subset U_Q^+$ and then the cocycle constructed on $A_u(Q_\mathbf{j})$ either in  Proposition \ref{cocycleCase1} or in Proposition \ref{cocycleCase2} (our choice of $\mathcal{J}$ guarantees that one of these applies) to produce a cocycle $c_\mathbf{j} \in Z^2(U_Q^+)$ which has the following property: 
\begin{equation}\label{keyproperty}\textup{Tr}_k\Delta(c_\mathbf{j}) \neq 0  \textup{ if and only if } k \in \mathbf{j};\end{equation}
note that by Theorem \ref{thm:image} each $\Delta(c_\mathbf{j})$ belongs to $sl_Q(d)$.

Let  $B=\{A_1, \ldots, A_l\}$ be a linear basis of $sl(d_1)\oplus \ldots \oplus sl(d_n)$; obviously we have $\textup{Tr}_k(A_p)=0$ for each $p=1, \ldots,l$, $k =1, \ldots,n$. We claim that the collection $B \cup \{\Delta(c)_\mathbf{j}: \mathbf{j} \in \Jnd\}$ forms a linear basis in $sl_Q(d)$. The dimension count implies that it suffices to show that these vectors are linearly independent. Consider then a matrix $X=\sum_{p=1}^l c_p A_p + \sum_{\mathbf{j}\in \Jnd} d_{\mathbf{j}} \Delta(c)_\mathbf{j}$, with $c_1,\ldots,c_k \in \C$, $d_\mathbf{j} \in \C$ for all $\mathbf{j} \in \Jnd$ and suppose that $X=0$. 
Let then $j_n$ be the element of $\{1, \ldots, n\}$ which belongs to precisely one triple in $\Jnd$ (as guaranteed by condition (i) in Lemma \ref{triples}), say $\mathbf{j}_n$.  Then by \eqref{keyproperty} and vanishing of the partial traces of all $A_p$ we have
\[ 0 = \textup{Tr}_{j_n} (X) = d_{\mathbf{j}_n} \textup{Tr}_{j_n}  \mathbf{j}_n.\]
As $\textup{Tr}_{j_n}  \mathbf{j}_n \neq 0$ by \eqref{keyproperty}, we must have $d_{\mathbf{j}_n} =0$. We can then ignore $j_n$, discard the corresponding triple $\mathbf{j}_n$ and use condition (ii) in Lemma \ref{triples} to find the next element $\mathbf{j}\in \Jnd$ for which $d_\mathbf{j}=0$. We continue this until we are left just with 2 triples, say $\mathbf{j}_1$ and $\mathbf{j_2}$ (and a four element subset of $\{1, \ldots,n\}$). Then we use condition (iii) in Lemma \ref{triples} to find two partial traces which will show that also coefficients   $d_{\mathbf{j}_1}, d_{\mathbf{j}_2}$ vanish. Thus $\sum_{p=1}^l c_p A_p=0$, and as $\{A_1, \ldots, A_l\}$ was a basis, in fact all coefficients $c_p$ also vanish. This ends the proof.
\end{proof}

\begin{remark}
	We believe that the conclusion of the above theorem should hold for all matrices $Q$.
The difficulty with the missing case lies in the fact that we cannot decide whether for a given $q\in (0,1)$ either \begin{equation*} 
	\begin{cases}
	\left[(1+ q)I-\frac1{q}\alpha - q^2\alpha^* \right]v_{12}=0 \\ 
	\gamma^*v_{32}= (\frac1{q}- \alpha^*)v_{12} 
	\end{cases}, 
	\end{equation*}
	or 
	\begin{equation*} 
	\begin{cases}
	[(1+q)I- \frac{1}{q}\alpha  -q^2 \alpha^*]v_{23} =0\\
	\gamma^*v_{21} =(\frac1{q} -\frac{1}{q^2}\alpha)v_{23}
	\end{cases} 
	\end{equation*}
	admits a non-zero solution (a pair of vectors in $\ell^2(\N_0)$). If the answer were positive, we would be able to argue as in the last subsection to first complete the case of $Q$ with three eigenspaces and then obtain the complete result via Lemma \ref{triples}.
\end{remark}

\begin{remark}
Careful analysis of the proof of Theorem \ref{theoremmain}, including the earlier constructions in this section and these in \cite[Subsection 4.1]{DFKS18}, permits identifying explicit $2$-cocycles which yield a linear basis in $H^2(U_Q^+)$ (excluding the exceptional case, not covered by Theorem \ref{theoremmain}).
\end{remark}

\section*{Appendix}
In the appendix we state and prove a lemma regarding square summability of certain sequences defined by recurrence relations, which was kindly provided to us by Ryszard Szwarc.

\begin{customthm}{A.1}\label{lem_rysiek}
Let $q\in (0,1)$, let $a, b>0$ and put $\kappa=a+b$ and $\mu=a\cdot b$. Let 
	$c_k:=(1-q^{2k})^{-\frac12}$, $k \in \N_0$. Let $(b_k)_{k\in \N_0}$ be the sequence defined by 
	the recurrence relation
	\begin{equation} \label{eq_bks3}
	b_0=1, \quad b_1=\kappa c_1, \quad 
	b_{k+1} = \kappa c_{k+1}b_k -\mu\frac{c_{k+1}}{c_k}b_{k-1}
	\quad \mbox{for}\quad k\geq 1.
	\end{equation}
	Then 
	$$\lim_{k\to \infty}{b_k\over b_{k-1}} = \max(a,b).$$
	Consequently, the sequence $(q^{-k}b_k)_{k\in \N_0}$ is square summable if and 
	only if $\max(a,b)<q$.  
\end{customthm}

\begin{proof}
Write the recurrence relation as
	$$b_k={1\over \kappa 
		c_{k+1}}b_{k+1}+{\mu \over \kappa c_k}b_{k-1}$$
and define
	\begin{equation} \label{eq_sequence_gk}
	g_0=0,\qquad g_k={\mu \over \kappa c_k}{b_{k-1}\over b_k}, \quad k\ge 1.  
	\end{equation}
Then
	\begin{align*}
	1&=\frac{1}{\kappa c_{k+1}}\frac{b_{k+1}}{b_k}+{\mu \over \kappa 
		c_k}\frac{b_{k-1}}{b_k}
	= \frac{1}{\kappa c_{k+1}}\frac{\mu}{\kappa c_{k+1}g_{k+1}}+g_k,
	\end{align*}
from which we deduce
	$$g_k(1-g_{k-1})=\frac{\mu}{\kappa^2c_k^2},\quad k\ge 1.$$
The sequence on the right hand side is increasing and convergent to 
	$\mu/\kappa^2.$
	Therefore the sequence $(g_k)_{k=0}^\infty$ is increasing. Indeed, $g_0=0\le 
	g_1=\mu/(\kappa 
	c_1^2).$
Fix $k \in \N$ and suppose we showed that  $g_{k-1}\le g_k.$ Then 
	$$ g_{k+1}(1-g_k)=\frac{\mu}{\kappa^2c_{k+1}^2} \ge \frac{\mu}{\kappa^2c_k^2} = 
	g_k(1-g_{k-1}) \ge g_k(1-g_k).$$	
Hence $g_{k+1}>g_k.$ 
	
We also have 
	\begin{equation} \label{eq_gk_bounded}
	g_k(1-g_{k-1})=\frac{\mu}{\kappa^2c_k^2}\le \frac14,\quad k\ge 1.
	\end{equation}
Indeed, $\frac{\mu}{\kappa^2c_k^2}\leq \frac{\mu}{\kappa^2}=\frac{ab}{(a+b)^2}$ 
and $4ab\leq (a+b)^2$. 
The estimate \eqref{eq_gk_bounded} yields $g_k\le \frac12.$ Indeed, $g_0\le 
	1/2.$ And if we know that $g_{k-1}\le 1/2$ for some $k\ge 1$,
then 
	$${1\over 2}g_k\le g_k(1-g_{k-1})\le {1\over 4},$$ i.e. $g_k\le 1/2.$	Thus the sequence $(g_k)_{k=1}^\infty$ is convergent to a number $g\le \frac12$ such that
	$$g(1-g)={\mu\over \kappa^2}.$$ 
Therefore
	$$g={1\over 2}\left ({1- \sqrt{1-4\mu/\kappa^2}}\right ).$$
In view of \eqref{eq_sequence_gk} this implies
	$$\lim_{k\to \infty}{b_k\over b_{k-1}}=\lim_{k\to \infty} {\mu \over \kappa c_k g_k} = \frac{\kappa 
		+\sqrt{\kappa^2-4\mu}}{2} .$$
	Using the assumption $\kappa=a+b$, $\mu=a\cdot b$, $a,b>0$ we get
	$$\lim_{k\to \infty}{b_k\over b_{k-1}} = \frac{(a+b) +|a-b|}{2} = \max(a,b).$$

The last conclusion follows from the d'Alembert's ratio test.

\end{proof}

\section*{Acknowledgements}
AK was supported by the Polish National Science Center grant  2016/21/D/ST1/03010. AS was partially supported by the National Science Center grant 2014/14/E/ST1/00525. We acknowledge support by the French 
MAEDI and MENESR and by the Polish National Agency for Academic Exchange (NAWA) in frame of the POLONIUM programme \\PPN/BIL/2018/1/00197/U/00021.
UF was supported by the French `Investissements d’Avenir' program, project ISITE-BFC
(contract ANR-15-IDEX-03), and by an ANR project (No./ANR-19-CE40-0002).
Finally we are very grateful to Ryszard Szwarc for his help regarding the lemma in the appendix.

\end{document}